\newcommand{\R}{\mathbb{R}}
\newcommand{\Oad}{\mathcal{O}^{\text{ad}}}
\newtheorem{theorem}{Theorem}
\newtheorem{definition}[theorem]{Definition}
\newtheorem{lemma}[theorem]{Lemma}
\newtheorem{proposition}[theorem]{Proposition}
\newtheorem{remark}[theorem]{Remark}
\newcommand{\hide}[1]{}
\begin{document}
\title{\LARGE Tracing locally Pareto optimal points by numerical integration}
%\subtitle{}
\author{Matthias Bolten${}^*$, Onur Tanil Doganay${}^*$, Hanno Gottschalk${}^*$\\ and Kathrin Klamroth${}^*$\\[2ex] ${}^*$IMACM, School of Mathematics and Natural Science,\\ University of Wuppertal, D-42119 Wuppertal\\
{\small \texttt{\{bolten,doganay,gottschalk,klamroth\}@math.uni-wuppertal.de}}}

\maketitle
\begin{abstract}
    We suggest a novel approach for the efficient and reliable approximation of the Pareto front of sufficiently smooth unconstrained bi-criteria optimization problems. Optimality conditions  formulated for weighted sum scalarizations of the problem yield a description of (parts of) the Pareto front as a parametric curve, parameterized by the scalarization parameter (i.e., the weight in the weighted sum scalarization). Its sensitivity w.r.t.\ parameter variations can be described by an ordinary differential equation (ODE). Starting from an arbitrary initial Pareto optimal solution, the Pareto front can then be traced by numerical integration. We provide an error analysis based on Lipschitz properties and suggest an explicit Runge-Kutta method for the numerical solution of the ODE. The method is validated on bi-criteria convex quadratic programming problems for which the exact solution is explicitly known, and numerically tested on complex bi-criteria shape optimization problems involving finite element discretizations of the state equation.
\end{abstract}

\noindent\textbf{Key words:} biobjective optimization $\bullet$ scalarization $\bullet$ Pareto tracing $\bullet$ shape optimization  

\hspace{.1cm}

\noindent\textbf{MSC (2010) :} 90B50, 34A12, 49Q10, %74P10, 
65C50, 60G55

\section{Introduction}
Multi-criteria optimization models gain more and more importance in economical and in technical applications. Decision makers have to balance between economical and ecological criteria and compromise between reliability and cost, to mention only two examples. In this situation, a concise representation of the set of Pareto optimal solutions, i.e., the set of those solutions that can not be improved in one criterion without deterioration in at least one other criterion, provides important trade-off information and thus  supports the decision maker in identifying relevant solution alternatives. In this paper, we aim at the reliable and efficient  approximation of the Pareto front of convex and sufficiently smooth unconstrained bi-criteria optimization problems.

Scalarization methods are a prevalent tool to compute representations and approximations of the Pareto front. We refer to   \cite{ehrg:mult:2005,miet:nonl:1998,ruzi:appr:2005} for a thorough introduction to the field and to \cite{das:aclo:1997,marl:thew:2010} for a discussion of the pros and cons of the weighted sum scalarization. 
Assuming differentiability, optimality conditions like, for example, the classical KKT-conditions, can be used to derive Pareto optimal solutions, see, for example, \cite{ehrg:mult:2005,hill:nonl:2001}. The Pareto front can then be recovered using subdivision techniques \cite{dell:cove:2005,jahn:mult:2006,schu:seto:2013}, sensitivities with respect to the scalarization parameters \cite{Eich09}, 
or continuation and predictor-corrector methods  \cite{Eich09,Mart18,peit:expl:2017,peit:asur:2018,ring:hand:2012,schm:pare:2008,schu:onco:2005}. The latter usually rely on scalarizations, leading to  single-objective counterpart problems that depend on one or several scalarization parameters (e.g., the weights in the case of weighted sum scalarizations) and that can hence be interpreted as parametric optimization problems. Under appropriate differentiability assumptions, predictor-corrector-type methods can then be related to the single-objective case  (see, for example, \cite{allg:intr:2003,gudd:para:1987}).

This paper is organized as follows: In Section~\ref{sec:Paretotracing} the unconstrained bi-criteria optimization problem is introduced  together with the (slightly atypical) notation that is used throughout this paper (Section~\ref{sec:notation}). Under appropriate differentiability assumptions, the problem of tracing the Pareto front is reformulated as an explicit ordinary differential equation (ODE). 
We derive existence and continuity results for its solution (Section~\ref{subsec:ODE}), assuming local Lipschitz continuity of the Hessians of both objective functions. In Section~\ref{subsec:approx} the results are extended to the case that initial Pareto critical solutions can only be approximated. We note that this case is particularly relevant for complex real world applications as discussed in the case study presented in Section~\ref{subsec:shapes}. This representation of the Pareto front is the basis for the application of well-established numerical integration methods for Pareto front tracing. We suggest the application of Runge-Kutta methods and provide local and global error estimates in Section~\ref{sec:numericalintegration}. The numerical results presented in Section~\ref{sec:results} for quadratic test problems (Section~\ref{subsec:QP}) and for complex bi-criteria shape optimization problems (Section~\ref{subsec:shapes}) validate the high solution quality and the efficiency of the approach.

\section{Pareto tracing using ODEs}\label{sec:Paretotracing}

\subsection{Some notation for bi-criteria Pareto optimality}\label{sec:notation}
We first collect some basic notation and facts on bi-criteria unconstrained optimization. For a detailed introduction into the field of multi-criteria optimization, see, for example, the books \cite{ehrg:mult:2005,miet:nonl:1998}. Let $J:\R^n\to \R^2$ be a bi-criteria objective function that is second order differentiable with continuous derivative. For notational reasons that will become clear later, we denote the individual objective functions by $J_0$ and $J_1$, i.e.,  $J=(J_0,J_1)$. For $x,x'\in \R^n$, $x'$ \emph{dominates} $x$, if $J_i(x')\leq J_i(x)$ for $i=0,1$  and $J_i(x')<J_i(x)$ for at least one $i\in\{0,1\}$. A solution $x\in \R^n$ is \emph{Pareto optimal}, if it is not dominated by any solution $x'\in\R^n$ and $x$ is \emph{locally Pareto optimal}, if there exists some open neighborhood $\mathcal{U}$ of $x$ such that no $x'\in\mathcal{U}$ dominates $x$. The corresponding outcome vector $J(x)$ is called \emph{nondominated} or \emph{locally nondominated}, respectively. A search direction $d\in \R^n$ is a bi-criteria descent direction at $x$, if $\nabla J_i^\top(x) d<0$ for $i\in \{0,1\}$, where $\nabla J_i(x)$ is the gradient of $J_i$. If at $x$ there is no bi-criteria descent direction, $x$ is called \emph{Pareto critical}. Obviously, being Pareto critical is a necessary but not sufficient condition for being locally Pareto optimal. For $\varepsilon>0$, $x$ is called $\varepsilon$-Pareto critical, if there is no search direction $d\in\R^n$ such that $\nabla_x J_{i}(x)^\top d\leq -\varepsilon \|d\|$ and $\nabla_x J_{j}(x)^\top d<0$, where $i,j\in\{0,1\}$ with $i\not=j$.

In the following, we work with the \emph{weighted sum scalarization} given by $J_\lambda=(1-\lambda)J_0+\lambda J_1$ for $\lambda\in [0,1]$ the preference parameter. $x$ then is (locally) optimal with respect to $\lambda$, if $J_\lambda(x)\leq J_\lambda(x')$ for $x'\in \R^n$ ($x'\in\mathcal{U}$) and $x$ is critical for $J_\lambda$ if $\nabla_xJ_\lambda(x)=(1-\lambda)\nabla_xJ_0(x)+\lambda \nabla_xJ_1(x) =0$. If this condition holds approximately such that $\|\nabla_xJ_\lambda(x)\|\leq\varepsilon$ for some small $\varepsilon >0$, we say that $x$ is $\varepsilon$-critical with respect to $J_\lambda$. For $\lambda\in(0,1)$ this clearly implies $x$ being $\varepsilon'$-Pareto critical with $\varepsilon'=\frac{\varepsilon}{\min\{\lambda,(1-\lambda)\}}$.

Let $\nabla^2_xJ_i(x)$ be the Hessian matrix of $J_i$ at $x$, $i\in \{0,1\}$ and likewise $\nabla^2_xJ_\lambda(x)$ the Hessian of $J_\lambda$. We say that $x$ fulfills the second order optimality conditions for $J_\lambda$ \emph{strictly} if $x$ is $J_\lambda$-critical and $\nabla^2_xJ_\lambda$ is strictly positive definite. If $x$ fulfills strict second order $J_\lambda$-optimality, it is locally a (unique) $J_\lambda$-optimal point which also implies local Pareto optimality of $x$  \cite{hill:nonl:2001,miet:nonl:1998}.

Note that the parameter values $\lambda=0$ and $\lambda=1$ correspond to the single-criteria minimization of the individual objective functions $J_0$ and $J_1$, respectively. If in this case the optimal solution is (locally) unique, then it is (locally) Pareto optimal. This is, for example, the case when the second order optimality condition is satisfied strictly. Otherwise, the set of optimal solutions may contain (locally) weakly Pareto optimal solutions that are not locally Pareto optimal, where a  solution $x'\in\R^n$ is called (locally) \emph{weakly Pareto optimal} if there is no other solution $x\in\R^n$ ($x\in\mathcal{U}$ for some open neighborhood $\mathcal{U}$ of $x'$) such that $J_i(x)<J_i(x')$ for $i=0,1$.

Conversely, if the outcome set $\{J(x)\,:\, x\in\R^n\}$ is $\R^2_+$-convex, i.e., if the set $\{J(x)+r\,:\, x\in\R^n,\, r\in\R^2 \text{~and~} r_i\geq 0,\, i=0,1\}$ is convex, then all Pareto optimal solutions can be retrieved by minimizing $J_\lambda$ with an appropriate scalarization parameter $\lambda\in[0,1]$. Moreover, if the outcome set is $\R^2_+$-convex and -compact, then the nondominated set is connected in the outcome space. We refer again to \cite{ehrg:mult:2005} and the references therein for a more detailed discussion of this and of related topics.

\subsection{Implicit and explicit ODEs for local Pareto optimality}\label{subsec:ODE}
From now on we assume that $\nabla^2_xJ_i$ is locally Lipschitz continuous  with Lipschitz constant $L_{H}(x,\delta)$ on the ball $B_\delta(x)$ of radius $\delta$ centered at $x$, i.e. $\|\nabla^2_xJ_i(x)-\nabla^2_xJ_i(x')\|\leq L_H(x,\delta)\|x-x'\|$, $i=0,1$, where $\|\cdot\|$ is the spectral norm. 

Let us first assume that we have found points $x(\lambda)$, which fulfill criticality for $J_\lambda$ on some interval $\lambda\in[\lambda_l,\lambda_u]\subseteq[0,1]$. Suppose furthermore that $x(\lambda)$ is a differentiable function of $\lambda$. Differentiating the first order optimality conditions $\nabla_xJ_\lambda(x(\lambda))=0$ with respect to $\lambda$, we obtain
\begin{equation}\label{eqn:implicit_bicriteria_ode}
    \nabla_x^2 J_\lambda( x (\lambda))\, \Dot{ x }(\lambda) = \nabla_x J_0( x (\lambda)) - \nabla_x J_1( x (\lambda)).
\end{equation}
If $x(\lambda)$ furthermore fulfills the second order optimality conditions with respect to $J_\lambda$ strictly, this implicit ordinary differential equation (ODE) can be rearranged to a standard ODE $\dot x(\lambda)=f(\lambda,x(\lambda))$ with $f$ defined by
\begin{equation}
     \Dot{ x }(\lambda) = \left(\nabla_x^2 J_\lambda( x (\lambda))\right)^{-1}\left(\nabla_x J_0( x (\lambda)) - \nabla_x J_1( x (\lambda))\right)=f(\lambda,x(\lambda)).
     \label{eqn:bicriteria_ode}
\end{equation}
Let us conversely assume that we have found $x_0$ which fulfills the strict second order optimality condition for $J_{\lambda_0}$. We easily see that the right hand side of \eqref{eqn:bicriteria_ode} as a function of $x$ is Lipschitz on some open neighborhood $\mathcal{U}$ of $x_0$ with a Lipschitz constant $L_{f}$ that is uniform in $\lambda$ on some interval $[\lambda_l,\lambda_u]\subseteq[0,1]$: 

\begin{lemma}
\label{lem:f_is_Lipshitz}
Let $\lambda\in[0,1]$ and let $\Lambda(\lambda,x)$ be the smallest eigenvalue of $\nabla^2_xJ_{\lambda}(x)$.  Then 
\begin{itemize}
    \item[(i)] $\Lambda(\lambda,x)$ is locally Lipschitz in $x$ with Lipschitz constant $L_{H}(x,\delta)$ on  $B_{\delta}(x)$;
    \item[(ii)] $\Lambda(\lambda,x)$ is Lipschitz in $\lambda$ on $[0,1]$ with Lipschitz constant $L_\lambda(x)=\|\nabla_x^2J_0(x)\|+\|\nabla_x^2J_1(x)\|$;
    \item[(iii)] Let $1>\varrho>0$, then for $x'\in  B_{\delta}(x)$ and $\lambda'\in[0,1]$ such that
    \[
    L_{H}(x,\delta)\|x-x'\|+L_\lambda(x)|\lambda-\lambda'|\leq (1-\varrho)\Lambda(\lambda,x),
    \]
    we have $\Lambda(\lambda',x')\geq\varrho \Lambda(\lambda,x)$;
    \item[(iv)] Let $[\lambda_{l},\lambda_u]$ containing $\lambda$ and $B_{\delta}(x)$ be given such that  $L_{H}(x,\delta)\delta+L_\lambda(x)\max\{\lambda_u-\lambda,\lambda-\lambda_l\}\leq (1-\varrho)\Lambda(\lambda,x)$. This is always possible as $L_{H}(x,\delta)$ is monotonically decreasing in $\delta$. Then $f(\lambda',x')$  is uniformly (in $\lambda '$) Lipschitz in $x'$ on $ [\lambda_l,\lambda_u]\times B_{\delta}(x)$ with Lipschitz constant bounded by 
    \[
    L_f(x,\delta ,\varrho)=2\left(\left(\frac{1}{\varrho\,\Lambda(\lambda,x)}\right)C_2(x,\delta )+\left(\frac{1}{\varrho\,\Lambda(\lambda,x)}\right)^2L_{H}(x,\delta )C_1(x,\delta )\right),
    \]
    where $C_1=\max_{i\in\{0,1\}}\sup_{x'\in B_{\delta }(x)}\|\nabla_x J_i(x')\|$ and $C_2=\max_{i\in\{0,1\}}\sup_{x'\in B_{\delta }(x)}\|\nabla_x^2 J_i(x')\|$. 
\end{itemize}
\begin{proof}
(i) Let $x'$ and $x''$ be in $B_\delta(x)$. Without loss of generality we assume that $\Lambda(\lambda,x')\geq \Lambda(\lambda,x'')$. Then, 
\begin{align*}
0&<\Lambda(\lambda,x')-\Lambda(\lambda,x'')= \inf_{u\in\R^n:\|u\|=1}u^\top\nabla^2_xJ_\lambda(x')u-\Lambda(\lambda,x'')\\
&\leq \inf_{u\in\R^n:\|u\|=1}u^\top\nabla^2_xJ_\lambda(x'')u+\sup_{u\in\R^n:\|u\|=1}u^\top\left(\nabla^2_xJ_\lambda(x')-\nabla^2_xJ_\lambda(x'')\right)u-\Lambda(\lambda,x'')\\
&= \sup_{u\in\R^n:\|u\|=1}u^\top\left(\nabla^2_xJ_\lambda(x')-\nabla^2_xJ_\lambda(x'')\right)u=\|\nabla^2_xJ_\lambda(x')-\nabla^2_xJ_\lambda(x'')\|\\
&\leq L_{H}(x,\delta)\, \|x'-x''\|.
\end{align*}
(ii) We proceed similarly as in (i) and obtain for $\lambda',\lambda''\in[0,1]$ with $\Lambda(\lambda',x)\geq \Lambda(\lambda'',x)$
\begin{align*}
0&<\Lambda(\lambda',x)-\Lambda(\lambda'',x)
\leq \sup_{u\in\R^n:\|u\|=1}u^\top\left(\nabla^2_xJ_{\lambda'}(x)-\nabla^2_xJ_{\lambda''}(x)\right)u\\
&= \|\nabla^2_xJ_{\lambda'}(x)-\nabla^2_xJ_{\lambda''}(x)\|\leq \left(\|\nabla^2_xJ_0(x)\|+\|\nabla^2_xJ_1(x)\|\right) |\lambda'-\lambda''|.
\end{align*}
(iii) For $x'\in B_\delta(x)$, (iii) now follows from (i) and (ii) by 
\begin{align*}
    \Lambda(\lambda',x')&=\Lambda(\lambda,x)+\left(\Lambda(\lambda',x)-\Lambda(\lambda,x)\right)+\left(\Lambda(\lambda',x')-\Lambda(\lambda',x)\right)\\
    &\geq \Lambda(\lambda,x)-L_{H}(x,\delta)\|x-x'\|-L_\lambda(x)|\lambda-\lambda'|\geq \varrho \Lambda(\lambda,x).
\end{align*}
(iv) We first recall the following fact: Let $A_0$, $A_1$ be two strictly positive definite $n\times n$ matrices with lowest eigenvalue not smaller than $\varepsilon$ and let $A_\tau=\tau A_1+(1-\tau)A_0$ for $\tau\in[0,1]$. Then $A_\tau$ is again positive definite with lowest eigenvalue not smaller than $\varepsilon$ and we obtain by using the sub-multiplicativity of the spectral norm 
\begin{align*}
\|A^{-1}_{1}-A^{-1}_{0}\| &= \left\|\int_0^1 \frac{d}{d\tau}A^{-1}_\tau \, \text{d}\tau\right\|=\left\|\int_0^1 A_\tau^{-1}\frac{d}{d\tau}A_{\tau} A_\tau^{-1} \, \text{d}\tau\right\|\\
&=\left\|\int_0^1 A_\tau^{-1}(A_1-A_0) A_\tau^{-1} \, \text{d}\tau\right\| \leq \int_0^1 \left\|A_\tau^{-1}(A_1-A_0) A_\tau^{-1}\right\| \, \text{d}\tau \leq \frac{1}{\varepsilon^2} \| A_1-A_0\|.
\end{align*}
 Consequently, for $x',x''\in B_{\delta }(x)$ and $\lambda'\in[\lambda_l,\lambda_u]$ we can use (iii) and the above estimate with $\varepsilon=\rho\Lambda(\lambda,x)$, see (iii), and obtain
\begin{align*}
    \|f(\lambda',x')-f(\lambda',x'')\| &\leq \left\|\left(\nabla^2_xJ_{\lambda'}(x')\right)^{-1}\left( \nabla_x J_0(x')-\nabla_x J_0(x'')-\nabla_x J_1(x')+\nabla_x J_1(x'')\right)\right\| \\
    &+ \left\|\left(\left(\nabla^2_xJ_{\lambda'}(x')\right)^{-1}-\left(\nabla^2_xJ_{\lambda'}(x'')\right)^{-1}\right)\left( \nabla_x J_0(x'')-\nabla_x J_1(x'')\right)\right\|\\
    &\leq \frac{1}{\varrho\Lambda(\lambda,x)}\left(\sup_{x'''\in B_{\delta }(x)}\| \nabla_x^2 J_0(x''')\|+\sup_{x'''\in B_{\delta }(x)}\|\nabla_x^2 J_1(x''')\|\right)\|x'-x''\|\\
    &+ \left(\frac{1}{\varrho\Lambda(\lambda,x)}\right)^2 \left\|\nabla^2_xJ_{\lambda'}(x')-\nabla^2_xJ_{\lambda'}(x'')\right\|\sup_{x'''\in B_{\delta }(x)}\left\|\nabla_x J_0(x''')-\nabla_xJ_1(x''')\right\|\\
    &\leq2\left(\frac{1}{\varrho\Lambda(\lambda,x)}C_2(x,\delta)+\left(\frac{1}{\varrho\Lambda(\lambda,x)}\right)^2L_{H}(x,\delta)C_1(x,\delta)\right)\|x'-x''\| .
\end{align*}
\end{proof}
\end{lemma}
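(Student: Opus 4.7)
The plan is to handle the four parts in order, since (iii) combines (i)--(ii) and (iv) then uses (iii) as a uniform ellipticity bound for the matrix inversion appearing in $f$.

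For (i), I would use the variational characterization $\Lambda(\lambda,x)=\inf_{\|u\|=1}u^\top\nabla_x^2J_\lambda(x)\,u$. Assuming without loss of generality that $\Lambda(\lambda,x')\geq\Lambda(\lambda,x'')$ and writing $\nabla_x^2J_\lambda(x')=\nabla_x^2J_\lambda(x'')+(\nabla_x^2J_\lambda(x')-\nabla_x^2J_\lambda(x''))$ inside the infimum, one gets $\Lambda(\lambda,x')-\Lambda(\lambda,x'')\leq\|\nabla_x^2J_\lambda(x')-\nabla_x^2J_\lambda(x'')\|$ in the spectral norm. Because $J_\lambda$ is the convex combination $(1-\lambda)J_0+\lambda J_1$ of functions whose Hessians are both $L_H(x,\delta)$-Lipschitz, the right-hand side is bounded by $L_H(x,\delta)\|x'-x''\|$. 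Part (ii) is proved by the same infimum argument, using the explicit linear dependence $\nabla_x^2J_{\lambda'}(x)-\nabla_x^2J_{\lambda''}(x)=(\lambda'-\lambda'')(\nabla_x^2J_1(x)-\nabla_x^2J_0(x))$, which by the triangle inequality yields $L_\lambda(x)=\|\nabla_x^2J_0(x)\|+\|\nabla_x^2J_1(x)\|$. Part (iii) then follows by decomposing $\Lambda(\lambda',x')-\Lambda(\lambda,x)$ through the intermediate point $\Lambda(\lambda',x)$, applying (i) and (ii) to the two differences, and invoking the hypothesis on the right-hand side.

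For (iv), abbreviate $H_\lambda(x)=\nabla_x^2J_\lambda(x)$ and $g(x)=\nabla_xJ_0(x)-\nabla_xJ_1(x)$, so that $f(\lambda,x)=H_\lambda(x)^{-1}g(x)$, and use the telescoping split
\[
f(\lambda',x')-f(\lambda',x'')=H_{\lambda'}(x')^{-1}\bigl(g(x')-g(x'')\bigr)+\bigl(H_{\lambda'}(x')^{-1}-H_{\lambda'}(x'')^{-1}\bigr)g(x'').
\]
The first term is controlled by the pointwise bound $\|H_{\lambda'}(x')^{-1}\|\leq 1/(\varrho\Lambda(\lambda,x))$ from (iii) together with a mean-value estimate $\|g(x')-g(x'')\|\leq 2C_2(x,\delta)\|x'-x''\|$. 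For the second term I would apply the identity $A_1^{-1}-A_0^{-1}=-\int_0^1 A_\tau^{-1}(A_1-A_0)A_\tau^{-1}\,d\tau$ to the matrix segment $A_\tau=(1-\tau)H_{\lambda'}(x'')+\tau H_{\lambda'}(x')$, use $\|A_1-A_0\|\leq L_H(x,\delta)\|x'-x''\|$ and $\|g(x'')\|\leq 2C_1(x,\delta)$, and add the two contributions to obtain the stated $L_f$.

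The main obstacle is the second summand in (iv): a naive first-order perturbation of the inverse breaks down unless we control the smallest eigenvalue not just at $x'$ and $x''$, but \emph{uniformly} along the whole interpolating matrix segment $A_\tau$. The trick is to interpolate the Hessians themselves rather than the base points: since $u^\top A_\tau u$ is a convex combination of $u^\top H_{\lambda'}(x')u$ and $u^\top H_{\lambda'}(x'')u$, every $A_\tau$ inherits the lower eigenvalue bound $\varrho\Lambda(\lambda,x)$ supplied by (iii), so $\|A_\tau^{-1}\|\leq 1/(\varrho\Lambda(\lambda,x))$ holds uniformly in $\tau$. The integral identity then turns this into the quadratic factor $1/(\varrho\Lambda(\lambda,x))^2$ that is characteristic of the derivative of matrix inversion, giving the second summand of $L_f(x,\delta,\varrho)$.
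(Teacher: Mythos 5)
Your proposal is correct and follows essentially the same route as the paper: the variational characterization of the smallest eigenvalue for (i)--(iii), and for (iv) the same two-term splitting of $f(\lambda',x')-f(\lambda',x'')$ combined with the integral identity for the difference of inverses along the matrix segment $A_\tau$, whose uniform invertibility follows from the convexity argument you describe (and whose sign you in fact state more carefully than the paper does). No gaps.
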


The locally Lipschitz property of $f(\lambda,x)$ established in Lemma \ref{lem:f_is_Lipshitz} (iv) provides us with the crucial input to the Picard-Lindel\"of theorem, that can now be applied as follows:

\begin{theorem}\label{theo:Picard_Lindelof}
Let $x_0\in \R^n$ and $\lambda_0\in[0,1]$ such that $x_0$ fulfills the strict second order optimality conditions with respect to $J_{\lambda_0}$.  Using the notation of Lemma~\ref{lem:f_is_Lipshitz}, let $\Delta,\delta>0$ and $1>\varrho>0$ such that $L_{H}(x_{0},\delta)\delta+L_\lambda(x_0)\Delta\leq(1-\varrho)\Lambda(\lambda_0,x_{0})$.   Let  $C_3(x_{0},\delta,\Delta)=\sup_{\lambda\in[\lambda_0-\Delta,\lambda_0+\Delta]\atop x\in B_\delta(x_{0})}\| f(\lambda,x)\|$.  Let furthermore $\Delta'=\min\{\Delta,\delta/C_3(x_{0},\delta,\Delta)\}$ and $\lambda_l=\lambda_0-\Delta'$, $\lambda_u=\lambda_0-\Delta'$.  Then,
\begin{itemize}
    \item[(i)] The solution $x(\lambda)$ of \eqref{eqn:bicriteria_ode} with initial condition $x(\lambda_0)=x_0$ at $\lambda_0$ exists and is unique locally on the interval $[\lambda_l,\lambda_u]$. $x(\lambda)$ is continuously differentiable on this interval;
    \item[(ii)] $x(\lambda)$ can be extended to a solution of \eqref{eqn:bicriteria_ode} to a maximal  time interval $(\lambda_l',\lambda_u')\subseteq [0,1]$ containing $\lambda_0$ such that $\Lambda(\lambda,x(\lambda))>0$ for $\lambda \in (\lambda_l',\lambda_u')$ and either $\lambda_u=1$ ($\lambda_l=0$) or $\Lambda(\lambda,x(\lambda))$ has accumulation point $0$ as $\lambda \nearrow\lambda_u'$ ($\lambda \searrow\lambda_l'$);
    \item[(iii)] On this interval, $x(\lambda)$ fulfills the strict second order optimality conditions with respect to $J_\lambda$ and thus is locally $J_\lambda$ optimal and locally Pareto optimal with respect to $J$.
\end{itemize}
\end{theorem}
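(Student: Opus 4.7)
The overall strategy is to read the three claims as the standard chain \emph{local existence and uniqueness} $\Rightarrow$ \emph{maximal extension} $\Rightarrow$ \emph{preservation of the defining conditions along the flow}, with Lemma~\ref{lem:f_is_Lipshitz} supplying exactly the quantitative Lipschitz and invertibility estimates required by the Picard--Lindel\"of theorem.

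For part (i), I would verify that the cylinder $R=[\lambda_0-\Delta,\lambda_0+\Delta]\times\overline{B_\delta(x_0)}$ meets the Picard--Lindel\"of hypotheses. The assumption $L_H(x_0,\delta)\delta+L_\lambda(x_0)\Delta\le(1-\varrho)\Lambda(\lambda_0,x_0)$ together with Lemma~\ref{lem:f_is_Lipshitz}(iii) yields $\Lambda(\lambda,x)\ge \varrho\,\Lambda(\lambda_0,x_0)>0$ on $R$, so $\nabla_x^2J_\lambda(x)$ is invertible and $f$ is well-defined and continuous there. Lemma~\ref{lem:f_is_Lipshitz}(iv) then gives uniform-in-$\lambda$ Lipschitz continuity of $f$ in $x$ on $R$, while the bound $\|f\|\le C_3(x_0,\delta,\Delta)$ is precisely the definition of $C_3$. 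The choice $\Delta'=\min\{\Delta,\delta/C_3\}$ guarantees that any Picard iterate starting at $x_0$ stays in $B_\delta(x_0)$, whence Picard--Lindel\"of produces a unique $C^1$-solution $x(\lambda)$ of \eqref{eqn:bicriteria_ode} on $[\lambda_0-\Delta',\lambda_0+\Delta']$.

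For part (ii), I would use the usual maximality/contradiction argument. Define $(\lambda_l',\lambda_u')\subseteq[0,1]$ as the union of the domains of all local solutions through $(\lambda_0,x_0)$; by uniqueness these agree on overlaps, so they glue into a single solution on $(\lambda_l',\lambda_u')$. On this interval $\Lambda(\lambda,x(\lambda))>0$ by construction. If one assumes $\lambda_u'<1$ and $\liminf_{\lambda\nearrow\lambda_u'}\Lambda(\lambda,x(\lambda))>0$, then on a left-neighborhood of $\lambda_u'$ the inverse Hessian is uniformly bounded; \eqref{eqn:bicriteria_ode} then bounds $\dot x$, forcing $x(\lambda)$ to have a limit $x_\ast\in\R^n$ as $\lambda\nearrow\lambda_u'$. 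Applying part (i) with fresh parameters at $(\lambda_u',x_\ast)$ extends the solution strictly beyond $\lambda_u'$, contradicting maximality. Hence $\Lambda(\lambda,x(\lambda))$ must accumulate at $0$. The argument at $\lambda_l'$ is symmetric.

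For part (iii), two things need to be checked on $(\lambda_l',\lambda_u')$. Criticality $\nabla_xJ_\lambda(x(\lambda))=0$ follows by differentiating $g(\lambda):=\nabla_xJ_\lambda(x(\lambda))$ and inserting \eqref{eqn:bicriteria_ode}:
\begin{equation*}
g'(\lambda)=\nabla_xJ_1(x(\lambda))-\nabla_xJ_0(x(\lambda))+\nabla_x^2J_\lambda(x(\lambda))\,\dot x(\lambda)=0,
\end{equation*}
and $g(\lambda_0)=0$ by hypothesis. Strict positive definiteness of $\nabla_x^2J_\lambda(x(\lambda))$ is immediate from $\Lambda(\lambda,x(\lambda))>0$. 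Local $J_\lambda$-optimality, and therefore local Pareto optimality, then follow from the classical second-order sufficient criterion recalled in Section~\ref{sec:notation}.

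The main obstacle is the extension step in (ii): one must exclude finite-time blow-up of $x(\lambda)$ as an alternative failure mode, which is handled by the $\dot x$-bound sketched above but requires a small amount of care because the Lemma's Lipschitz constants depend on the base point and on $\delta$; the key point is that, as long as $\Lambda(\lambda,x(\lambda))$ stays bounded away from zero on a neighborhood of the putative terminal time, these constants can be controlled uniformly, enabling the re-application of part (i).
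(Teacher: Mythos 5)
Your proposal is correct and follows essentially the same route as the paper: Picard--Lindel\"of on the cylinder $[\lambda_0-\Delta,\lambda_0+\Delta]\times \overline{B_\delta(x_0)}$ using Lemma~\ref{lem:f_is_Lipshitz}(iii)--(iv) for (i), continuation of the local solution as long as $\Lambda(\lambda,x(\lambda))$ stays positive for (ii), and integration/differentiation of $\nabla_xJ_\lambda(x(\lambda))$ plus the second-order sufficient condition for (iii). The only cosmetic difference is that you phrase (ii) as a maximality-plus-contradiction argument excluding blow-up via the $\dot x$ bound, whereas the paper iterates the local existence step constructively; these are the same continuation idea.
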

\begin{proof}
(i) By Lemma \ref{lem:f_is_Lipshitz} (iv), the conditions of the Picard Lindel\"of theorem are fulfilled. The assertion thus follows from the local existence and uniqueness of the solutions of ODEs, see e.g.\ \cite[Theorem 15.2]{MR2439721}.  

Statement (ii) immediately follows from the fact that, since $\Lambda(\lambda_u,x(\lambda_u))\geq \rho \Lambda(\lambda_0,x(\lambda_0))$,  the argument from (i) can be iterated with $\lambda_0$ replaced by $\lambda_{u}=\lambda_0^{(1)}$ and $x_0$ with $x(\lambda_{u})$. This procedure can be iterated, until $\lambda_0^{(n)}$ is either reaching one or $\Lambda(\lambda_0^{(n)},x(\lambda_0^{(n)}))$ is approaching $0$. Now define the maximal upper boundary $\lambda_u'=\lim_{n\to\infty}\lambda_0^{(n)}$. An analogous argument holds for the minimal lower bound $\lambda_l'$.

To see (iii), we recall that for any $\lambda\in(\lambda_l',\lambda_u')$, \eqref{eqn:bicriteria_ode} implies \eqref{eqn:implicit_bicriteria_ode} and thus \begin{equation}\label{eqn:integrate_criticality}
\nabla_xJ_\lambda(x(\lambda))=\nabla_xJ_{\lambda_0}(x_0)+\int_{\lambda_0}^\lambda\frac{d}{d\tau} \nabla_xJ_\tau(x(\tau))\, \text{d}\tau=0,
\end{equation}
hence $x(\lambda)$ is critical for $J_\lambda$ and thus Pareto critical. As furthermore $\lambda\in(\lambda_l',\lambda_u')$,  $\nabla^2_xJ_{\lambda}(x(\lambda))$ is strictly positive definite as by (ii) $\Lambda(\lambda,x)>0$ holds, hence $x(\lambda)$ fulfills strict second order optimality for $J_\lambda$ and is locally Pareto optimal.
\end{proof}

\begin{remark}
(i) For notational convenience we consider an unrestricted domain, i.e., all  $x\in\R^n$ are feasible solutions. However, all results of Section~\ref{sec:Paretotracing} easily generalize to the case, where $J_i(x)$ is only defined on an open subset of $\R^n$. In this case, in the local constructions given e.g.\ in Lemma~\ref{lem:f_is_Lipshitz} and Theorem~\ref{theo:Picard_Lindelof} the constants $\delta>0$ have to be chosen smaller than the distance to the boundary of the domain of definition and the results still hold with the obvious adaptations on the maximal intervals of existence $(\lambda_l',\lambda_u')$. 

(ii) In the degenerate case that the two objective functions are equal, i.e., if $J_0=J_1$, then $J_\lambda=J_0=J_1$ for all $\lambda\in[0,1]$. In this case, any optimal solution of $J_0$ (assuming that it exists) is Pareto optimal, and the nondominated set consists of exactly one (ideal) outcome vector. Then 
\eqref{eqn:bicriteria_ode}  becomes  $\dot{x}(\lambda)=0$, which is in accordance with the fact that the nondominated set consists of a unique outcome vector.
\end{remark}

Several estimates for the numerical approximation of $x(\lambda)$ rely on the regularity of $x(\lambda)$. We therefore recall the following standard result on the regularity of solutions to ODEs. 
\begin{lemma}
\label{lem:regularity}
Assume that $J_i$, $i\in\{0,1\}$, is $p+2$ times differentiable with locally bounded $p+2$-nd derivative, $p\in\mathbb{N}_0$. Let $[\lambda_l,\lambda_u]\subset(\lambda_l',\lambda_u')$ be a closed interval in the maximal interval from Theorem~\ref{theo:Picard_Lindelof}(ii).
Then, $x(\lambda)$ is $p+1$ times differentiable with bounded $p+1$st derivative on $[\lambda_l,\lambda_u]$.
\end{lemma}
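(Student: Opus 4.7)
The plan is to invoke the explicit form of the ODE~\eqref{eqn:bicriteria_ode} and bootstrap regularity from its right-hand side $f$ onto the solution $x$. First I would record the differentiability of
\[
f(\lambda,x)=\left(\nabla_x^2 J_\lambda(x)\right)^{-1}\bigl(\nabla_x J_0(x)-\nabla_x J_1(x)\bigr).
\]
By assumption, $\nabla_x J_i$ is $(p+1)$-times differentiable in $x$ and $\nabla_x^2 J_i$ is $p$-times differentiable in $x$; moreover $J_\lambda$ is affine in $\lambda$, so $\nabla_x^2 J_\lambda$ and $\nabla_x J_0-\nabla_x J_1$ are jointly $p$-times differentiable in $(\lambda,x)$. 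Matrix inversion being smooth on the open set of invertible matrices, the composition $f(\lambda,x)$ is $p$-times continuously differentiable on every open set where $\nabla_x^2 J_\lambda(x)$ is invertible.

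Next, I would argue that $f$ has bounded partial derivatives up to order $p$ on a suitable neighborhood of the graph of the solution. The compact interval $[\lambda_l,\lambda_u]$ lies strictly inside $(\lambda_l',\lambda_u')$, so by Theorem~\ref{theo:Picard_Lindelof}(iii) and the joint continuity of $(\lambda,x)\mapsto\Lambda(\lambda,x)$ coming from Lemma~\ref{lem:f_is_Lipshitz}(i)-(ii), the smallest eigenvalue of $\nabla_x^2J_\lambda$ is bounded below by some $\mu>0$ on a closed tubular neighborhood $\mathcal{T}$ of the compact graph $\{(\lambda,x(\lambda)):\lambda\in[\lambda_l,\lambda_u]\}$. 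Together with the local bound on $\|D^{p+2}J_i\|$ and compactness of $\mathcal{T}$, this yields a uniform bound on every partial derivative of $f$ of order at most $p$ on $\mathcal{T}$.

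The final step is a short induction on $k=1,\dots,p+1$ showing that $x\in C^k([\lambda_l,\lambda_u],\R^n)$ with bounded $k$-th derivative. The base case $k=1$ is Theorem~\ref{theo:Picard_Lindelof}(i). For the inductive step, differentiating $\dot x(\lambda)=f(\lambda,x(\lambda))$ a further $k-1$ times via the (multivariate) Faà di Bruno formula expresses $x^{(k+1)}(\lambda)$ as a polynomial in partial derivatives of $f$ of order at most $k\leq p$ (all continuous by the previous step) and in derivatives $x^{(j)}$ with $j\leq k$ (available by the inductive hypothesis). Continuity of this expression gives $x\in C^{k+1}$, and the uniform bounds on $\mathcal{T}$ yield boundedness of $x^{(k+1)}$ on the compact interval.

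The only delicate point is the construction of $\mathcal{T}$: one must ensure it stays inside the domain where $f$ is defined and where $\Lambda$ is uniformly positive. This, however, is guaranteed by joint continuity of $\Lambda$ from Lemma~\ref{lem:f_is_Lipshitz} together with continuity of $x(\cdot)$ on the compact interval $[\lambda_l,\lambda_u]$, both of which are already in hand. Once $\mathcal{T}$ is fixed, the remainder is a routine chain-rule differentiation.
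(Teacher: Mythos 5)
Your proposal is correct and follows essentially the same route as the paper: both exploit the smoothness of matrix inversion to conclude that $f$ is $p$-times differentiable, and then bootstrap regularity of $x$ by repeatedly differentiating $\dot x(\lambda)=f(\lambda,x(\lambda))$ along the flow (the paper's recursive $f^{(l)}=\partial_\lambda f^{(l-1)}+\nabla_x f^{(l-1)\top}f$ is exactly your Faà di Bruno step). Your explicit construction of the tubular neighborhood $\mathcal{T}$ to justify the uniform bounds is a slightly more careful rendering of the paper's appeal to local boundedness on the compact interval, but it is not a different argument.
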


\begin{proof}
To prove  the $p+1$-order regularity, we note that, if a matrix $A$  is invertible, the operation of inverting is $C^\infty$ on a neighborhood of $A$. Denoting the right hand side of \eqref{eqn:bicriteria_ode}  with $f(\lambda,x)=f^{(0)}(\lambda,x)$, we see that $f^{(0)}$ is $p$ times differentiable in $\lambda$ and $x$. For $l=1,\ldots,p$, we recursively define  $f^{(l)}(\lambda,x)=\frac{\partial}{\partial\lambda}f^{(l-1)}(\lambda,x)+\nabla_xf^{(l-1)}(\lambda,x)^\top f(\lambda,x)$ where $f^{(l)}$ is $p-l$ times differentiable in $x$ and $\lambda$ and locally bounded where $p=l$. Differentiating $x^{(l)}(\lambda)=\left(\frac{d}{d\lambda}\right)^lx(\lambda)=f^{(l-1)}(\lambda,x(\lambda))$ with respect to $\lambda$, we see that $x^{(l+1)}(\lambda)=f^{(l)}(\lambda,x(\lambda))$ for $l=0,\ldots,p$ exists and is bounded on $[\lambda_l,\lambda_u]$ if $l=p$.
\end{proof}

\subsection{Approximately Pareto critical initial conditions and numerical stability}\label{subsec:approx}
In Theorem~\ref{theo:Picard_Lindelof} we assumed that the initial value $x_0$ fulfills the strict second order optimality conditions for $J_{\lambda_0}$. Thus $x_0$ is the (local) optimum to the single-criteria optimization problem given by the objective function $J_{\lambda_0}$. In many applications, we do not know $x_0$, but have to use approximate solutions to the optimization problem posed by $J_{\lambda_0}$, instead.    Suppose that $x_{0,k}\to x_0$ are the iterates of some optimization algorithm started sufficiently close to $x_0$ such that the optimization problem is convex and convergence is guaranteed. For example, $x_{0,k}$ can be obtained by a gradient descent method or Newton-type method applied to  $J_{\lambda_0}$.

Ultimately, we may assume that $x_{0,k}$ is sufficiently close to $x_0$ such that also $\nabla^2_xJ_{\lambda_0}(x_{0,k})$ is strictly positive definite. Assuming that the optimization algorithm that produces $x_{0,k}$ applies a gradient based stopping criterion, e.g. $\|\nabla_x J_{\lambda_0}(x_{0,k})\|\leq \varepsilon$ for some $\varepsilon>0$, the terminal output $x_{0,k}$ is $\varepsilon$-$J_{\lambda_0}$ critical. In the following we see that starting the ODE \eqref{eqn:bicriteria_ode} in $x_{0,k}$ provides $\varepsilon$- critical solutions $x_k(\lambda)$ with respect to $J_\lambda$ for $\lambda$ in some interval containing $\lambda_0$.

In many applications, the function $f(\lambda,x(\lambda))$  has to be approximated using numerical schemes $f_l(\lambda,x)$ with limited accuracy. Here $l$ is some parameter that controls the numerical error in the sense that $\varepsilon_l(\mathcal{C},\lambda)=\sup_{x\in \mathcal{C}}\|f(\lambda,x)-f_l(\lambda,x)\|\to 0$ if $l\to \infty$ and $\mathcal{C}\subseteq\R^n$ is compact. It is therefore desirable to be able to control the effect of the numerical error in $f-f_l$ on the solution of \eqref{eqn:bicriteria_ode} along with the error caused by the error in the initial condition $x_0-x_{0,k}$. The following proposition uses the standard repertoire of ODE theory to provide comprehensive estimates:  

\begin{proposition}
\label{prop:eps-crit_front}
Let $x_0$ fulfill the strict second order optimality condition with respect to $J_{\lambda_0}$ and $x_{0,k}\to x_{0}$ as $k\to\infty$. Then,
\begin{itemize}
    \item[(i)] Let $\varepsilon>0$. For  $k$  sufficiently large, solutions $x_{k}(\lambda)$ to \eqref{eqn:bicriteria_ode} started with initial condition $x_k(\lambda_0)=x_{0,k}$ at $\lambda_0$ exist on some maximal intervals $(\lambda_{l,k}',\lambda_{u,k}')\subset[0,1]$ and $x_k(\lambda)$ is $J_\lambda$ $\varepsilon$-critical and hence  $\varepsilon'=\frac{\varepsilon}{\min\{\lambda,(1-\lambda)\}}$-Pareto critical for $\lambda\in (\lambda_{l,k}',\lambda_{u,k}')$;
    \item[(ii)]  Let $I=[\lambda_l,\lambda_u]\subset(\lambda_l',\lambda_u')$ a compact sub-interval of the maximal interval from (i) where $x(\lambda)$ is shown to exist. %of existence of $x(\lambda)$. 
    Let $\Lambda(I)=\inf_{\lambda\in I}\Lambda(\lambda,x(\lambda))$, $L_{H}(I,\delta)=\sup_{\lambda\in I}L_{H
    }(x(\lambda),\delta)$ and $C_i(\delta,I)=\sup_{\lambda\in I} C_i(x(\lambda),\delta)$, $i=\{1,2\}$. Let furthermore $0<\delta  <\Lambda(I)/L_{H}(I,\delta)$, which is always possible as $L_{H}(I,\delta)$ is finite and monotonically increasing in $\delta$. We also set
    \[
    L_f(\delta ,I)=2\left(\left(\frac{1}{\Lambda(I)-\delta L_{H}(I,\delta)}\right)C_2(\delta,I )+\left(\frac{1}{\Lambda(I)-\delta L_{H}(I,\delta)}\right)^2L_{H}(I,\delta )C_1(\delta,I )\right).
    \]
    Then, for $k$ sufficiently large, $x_k(\lambda)$ exists for $\lambda \in [\lambda_l,\lambda_u] $ and
    \begin{equation} \label{eqn:uniform_convergence_initial}
    \|x(\cdot)-x_k(\cdot)\|_{C(I,\R^n)}\leq \|x_0-x_{0,k}\| \, e^{L_f(\delta,I)\max\{\lambda_0-\lambda_l,\lambda_u-\lambda_0\}},
\end{equation}
where $\|\cdot\|_{C(I,\R^n)}$ stands for the maximum norm on $I$. Hence, $x_k(\lambda)$ converges with the same rate to the locally $J_\lambda$ and locally Pareto optimal point $x(\lambda)$ as $x_{0,k}$ converges to the locally $J_{\lambda_0}$ optimal and locally Pareto optimal point $x_0$.
\item[(iii)] Let, in addition, $f_l$ be a locally Lipschitz function such that $f-f_l\to 0$ uniformly on compact sets. Then, for $\delta$ as in (ii)  and  $k,l$ sufficiently large, the solution $x_{k;l}(\lambda)$ of $\dot x_{k;l}(\lambda)=f_l(\lambda,x_{k;l}(\lambda))$ with initial condition  $x_{k;l}(\lambda_0)=x_{0,k}$ at $\lambda_0$ exists on $I$ and we have the estimate  
\begin{align} \label{eqn:uniform_convergence_inititial_and_f}
\begin{split}
    \|x(\cdot)-x_{k;l}(\cdot)\|_{C(I,\R^n)}&\leq \|x_0-x_{0,k}\| \, e^{L_f(\delta,I)\max\{\lambda_0-\lambda_l,\lambda_u-\lambda_0\}}\\
    &+\frac{1}{L_f(\delta,I)}\left(e^{L_f(\delta,I)\max\{\lambda_0-\lambda_l,\lambda_u-\lambda_0\}}-1\right)\|f_l-f\|_{C(\overline{\mathcal{U}(I,\delta)},\R^n)},
    \end{split}
\end{align}
where $\mathcal{U}(I,\delta)=\bigcup_{\lambda\in I}B_\delta(x(\lambda))$ and $\|\cdot\|_{C(\overline{\mathcal{U}(I,\delta)},\R^n)}$ is the maximum norm on $\overline{\mathcal{U}(I,\delta)}$.
\end{itemize}
   
\end{proposition}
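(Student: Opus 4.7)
The plan is to reduce each of the three claims to standard consequences of Picard--Lindelöf and Gronwall's inequality, with Lemma~\ref{lem:f_is_Lipshitz} supplying a Lipschitz constant for $f$ on a suitable tube around the unperturbed trajectory $x(\cdot)$.

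For part (i), I first use continuity of $\nabla^2_x J_{\lambda_0}$ together with $x_{0,k}\to x_0$ to conclude that $\nabla^2_x J_{\lambda_0}(x_{0,k})$ is strictly positive definite for all $k$ large enough. Lemma~\ref{lem:f_is_Lipshitz}(iv), applied with center $x_{0,k}$, then gives the local Lipschitz property needed to invoke Picard--Lindelöf and the extension argument from Theorem~\ref{theo:Picard_Lindelof}(ii), producing the maximal interval $(\lambda_{l,k}',\lambda_{u,k}')$. The $\varepsilon$-criticality then follows from a conservation-law observation: differentiating $\nabla_x J_\lambda(x_k(\lambda))$ with respect to $\lambda$ along the flow yields
\[\tfrac{d}{d\lambda}\nabla_x J_\lambda(x_k(\lambda))=\nabla^2_x J_\lambda(x_k(\lambda))\,\dot x_k(\lambda)+\nabla_x J_1(x_k(\lambda))-\nabla_x J_0(x_k(\lambda)),\]
which vanishes by \eqref{eqn:bicriteria_ode}. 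Hence $\nabla_x J_\lambda(x_k(\lambda))\equiv\nabla_x J_{\lambda_0}(x_{0,k})$, which has norm at most $\varepsilon$ by the gradient-based stopping criterion, and the Pareto criticality bound is the implication recalled at the end of Section~\ref{sec:notation}.

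For part (ii), the main preparation is to verify that $L_f(\delta,I)$ as defined in the statement serves as a uniform-in-$\lambda$ Lipschitz constant for $f$ on the tube $\mathcal{U}(I,\delta)$. This follows by essentially repeating the calculation of Lemma~\ref{lem:f_is_Lipshitz}(iv) but replacing the pointwise lower bound $\varrho\Lambda(\lambda,x)$ on the smallest eigenvalue by the tube-wide lower bound $\Lambda(I)-\delta L_{H}(I,\delta)$, which is positive by the choice of $\delta$ and Lemma~\ref{lem:f_is_Lipshitz}(iii). A standard exit-time/bootstrap argument, using the pre-derived right-hand side $\|x_0-x_{0,k}\|\,e^{L_f(\delta,I)\max\{\lambda_0-\lambda_l,\lambda_u-\lambda_0\}}$, confines $x_k$ to the tube on all of $I$ once $k$ is large enough. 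Then subtracting the integral forms of \eqref{eqn:bicriteria_ode} for $x$ and $x_k$, taking norms and applying the uniform Lipschitz estimate gives an integral inequality for $\|x(\lambda)-x_k(\lambda)\|$ to which Gronwall's lemma applies, yielding \eqref{eqn:uniform_convergence_initial}.

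Part (iii) is a perturbation of (ii). The integral form of $\dot x-\dot x_{k;l}=f(\lambda,x)-f_l(\lambda,x_{k;l})$ has integrand that splits into $[f(\tau,x(\tau))-f(\tau,x_{k;l}(\tau))]+[f(\tau,x_{k;l}(\tau))-f_l(\tau,x_{k;l}(\tau))]$. For $k,l$ sufficiently large the same exit-time argument as in (ii) keeps $x_{k;l}$ inside $\mathcal{U}(I,\delta)$, so the first bracket is bounded by $L_f(\delta,I)\|x-x_{k;l}\|$ and the second by $\|f_l-f\|_{C(\overline{\mathcal{U}(I,\delta)},\R^n)}$. Gronwall's inequality with an additive constant forcing term then produces \eqref{eqn:uniform_convergence_inititial_and_f}. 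The only subtle step in the whole proof is the tube-confinement argument common to (ii) and (iii); everything else is routine ODE stability. I would handle confinement by a contradiction at the first putative exit time, noting that the a~priori bound that is valid up to that time is strictly less than $\delta$ once $k$ and $l$ are large enough.
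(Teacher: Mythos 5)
Your proof follows essentially the same route as the paper's: the conservation law $\nabla_x J_\lambda(x_k(\lambda))\equiv\nabla_x J_{\lambda_0}(x_{0,k})$ obtained by integrating along the flow for (i), the tube $\mathcal{U}(I,\delta)$ with the uniform Lipschitz constant from Lemma~\ref{lem:f_is_Lipshitz}(iv) (the paper's choice $\varrho=1-\delta L_{H}(I,\delta)/\Lambda(I)$ is exactly your tube-wide eigenvalue bound $\Lambda(I)-\delta L_{H}(I,\delta)$) combined with Gronwall and a continuation/exit-time argument for (ii), and the perturbed Gronwall inequality for (iii), which the paper merely delegates to a textbook reference. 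The one small slip is in (i): under the proposition's hypotheses the bound $\|\nabla_x J_{\lambda_0}(x_{0,k})\|\leq\varepsilon$ for large $k$ should be derived from $x_{0,k}\to x_0$, continuity of the gradient and $\nabla_x J_{\lambda_0}(x_0)=0$ (as the paper does), rather than from a stopping criterion that is not part of the stated assumptions.
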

\begin{proof}
(i) If $k$ is sufficiently large such that $\delta=\|x_{0,k}-x_0\|$ fulfills $\delta L_{H}(x,\delta)<\Lambda(\lambda_0,x)$, $\Lambda(\lambda_0,x_{0,k})>0$ and thus $x_k(\lambda)$ exists for some maximal interval $(\lambda_{l,k}',\lambda_{u,k}')$ by repeating the proof of Theorem \ref{theo:Picard_Lindelof} (ii). Furthermore, as $J_{\lambda_0}(x)$ is continuous in $x$,   $x_{0,k}$ is $\varepsilon$-critical with respect to $J_{\lambda_0}$ if $k$ is sufficiently large. By integration as in \eqref{eqn:integrate_criticality} one furthermore obtains that 
\[
\nabla_xJ_\lambda (x_k(\lambda))=\nabla_xJ_{\lambda_0} (x_{0,k}).
\]
Thus, $x_k(\lambda)$ then is $\varepsilon$-critical for $J_\lambda$ for $\lambda\in I$ if $k$ is sufficiently large. The statement on $\varepsilon'$-Pareto criticality then follows as in Subsection \ref{sec:notation}.

(ii) Let now $[\lambda_l,\lambda_u]\subseteq (\lambda_l',\lambda_u')$ be some closed interval and let $\delta>0$ be sufficiently small such that $0<\delta  <\Lambda(I)/L_{H}(I,\delta)$. Let $k$ be sufficiently large such that $\|x_0-x_{0,k}\|< \delta e^{-L_f(\delta,I)\max\{\lambda_0-\lambda_l,\lambda_u-\lambda_0\}}$. Then, this in particular implies $x_{0,k}\in B_\delta(x_0)\subseteq \mathcal{U}(I,\delta)=\bigcup_{\lambda\in I}B_\delta(x(\lambda))$.  By application of Lemma~\ref{lem:f_is_Lipshitz}(iv) with $\varrho=1-\delta L_{H}(I,\delta)/\Lambda(I)$, $L_{H}(I,\delta)$ gives an upper bound for the uniform Lipschitz constant of $f$ on  $\mathcal{U}(I,\delta)$. It follows that $x_k(\lambda)$ exists on some interval $I_n=[\lambda_{l,k},\lambda_{u.k}]\subseteq I$ containing $\lambda_0$. Application of the Gronwall lemma leads to the well known estimate on the continuous dependence on the initial condition (see e.g. Theorem 12.1 in \cite{MR2439721})  
\begin{equation}\label{eqn:Gronwall}
   \|x(\lambda)-x_k(\lambda)\| \leq \|x_0-x_{0,k}\|\,  e^{L_f(I,\delta)|\lambda-\lambda_0|}\leq \|x_0-x_{0,k}\|\,  e^{L_f(I,\delta)\max\{\lambda_0-\lambda_l,\lambda_u-\lambda_0\}}<\delta, 
\end{equation}
for $\lambda\in I_n$. Therefore,  $x_k(\lambda)\in \mathcal{U}(I,\delta)$ and $x_k(\cdot)$ can be further extended beyond $I_n$. The above estimate applied repeatedly shows that $I=[\lambda_l,\lambda_u]$  is contained in the maximal interval of existence $I'_n=(\lambda_{l,n}',\lambda_{u,n}')$ for $x_k(\cdot)$ since for $\lambda\in I$, $x_k(\lambda)$ never leaves $\mathcal{U}(I,\delta)$ and the inequality \eqref{eqn:Gronwall} is valid for all $\lambda\in I$, which proves the proposition's second assertion.

(iii) Statement (iii) is also covered by Theorem~12.1 in \cite{MR2439721} by essentially the same arguments as in (ii). We leave the details to the reader.
\end{proof}

If the computational complexity is known for the computation of $x_{0,k}$ and $f_l$ with a given precision $\varepsilon$,  inequality \eqref{eqn:uniform_convergence_inititial_and_f}  provides the basis for finding an efficient balance between the cost of approximating the initial condition $ x_{0}$ and approximating the function $f$.

\section{Pareto front tracing by numerical integration }\label{sec:numericalintegration}
For an approximation $x_k$ of the Pareto front we have to solve the ODE
\begin{equation}
\Dot{ x_k }(\lambda) = f(\lambda,x_k(\lambda))
     \label{eqn:bicriteria_ode_for_xk}
\end{equation}
with right hand side $f$ as defined in \eqref{eqn:bicriteria_ode}. To approximate \eqref{eqn:bicriteria_ode_for_xk} by numerical integration we need an initial value $x_{0,k}=x(\lambda_0)$ at $\lambda_0$. An obvious choice is $\lambda_0 = 0$. We could also start at $\lambda_0 = 1$, following the ($\varepsilon$-)Pareto critical points backwards, or at a compromise solution obtained, for example, for $\lambda_0=0.5$. Note that the problem might not be well-defined for $\lambda_0 = 0$ or $\lambda_0 = 1$, in which case another starting point $\lambda_0$ is chosen. This allows for following the Pareto critical points in two directions at the same time, solving two independent initial value problems separately. In the following, we denote the $i$th iterate of a numerical integration method by $x_{i,k}$. 

The most basic method for solving \eqref{eqn:bicriteria_ode_for_xk} numerically is the explicit Euler method. It approximates the derivative of $x_k$ by 
\[
    \Dot{x_k}(\lambda) \approx \frac{ x_k(\lambda+h) -  x_k(\lambda)}{h},
\]
yielding
\[
     x_k(\lambda + h) \approx  x_{1,k} := x_k(\lambda) + h \left(\nabla_x^2 J_\lambda( x_k (\lambda))\right)^{-1}\left(\nabla_x J_0( x_(\lambda)) - \nabla_x J_1( x_k (\lambda))\right),
\]
where $h>0$ denotes the step-size of the method.
The global error of the Euler method behaves like $C h$, with a constant $C$ depending on the problem~\cite{MR3559553}.

When higher order is demanded, Runge-Kutta methods \cite{MR1638643,MR1510879} can be used.
\begin{definition}[Explicit Runge-Kutta method]\label{def:erk}
Let $s \in \mathbb{N}$, $h > 0$ and let $a_{2,1}, a_{3,1}, a_{3,2}, \dots, a_{s,1},$ $a_{s,2}, \dots, a_{s,s-1}, b_1, \dots, b_s, c_2, \dots, c_s \in \mathbb{R}$. Then the method
\begin{align}
    k_1 & = f(\lambda_0,x_{0,k}) \nonumber\\
    k_2 & = f(\lambda_0 + c_2 h, x_{0,k} + h a_{2,1} k_1) \nonumber\\
    k_3 & = f(\lambda_0 + c_3 h, x_{0,k} + h (a_{3,1} k_1 + a_{3,2} k_2)) \nonumber\\
    & \vdots \label{eqn:erk}\\
    k_{s} & = f(\lambda_0 + c_s h, x_{0,k} + h (a_{s,1} k_1 + \dots + a_{s,s-1} k_{s-1})) \nonumber\\
    x_{1,k} & = x_{0,k} + h (b_1 k_1 + \dots + b_s k_s) \nonumber
\end{align}
is called \emph{s-stage explicit Runge-Kutta method} for \eqref{eqn:bicriteria_ode_for_xk}.
\end{definition}
\begin{definition}[c.f.~\cite{MR1227985}, Definition II.1.2, p.~134]
A Runge-Kutta method  \eqref{eqn:erk} is of \emph{order $p$} if for sufficiently smooth problems \eqref{eqn:implicit_bicriteria_ode} there exists a $K$ independent from $h$ such that 
\[
    \|x_k(\lambda_0+h) - x_{1,k}\| \leq K h^{p+1}.
\]
\end{definition}
The following rigorous error estimate holds true:
\begin{theorem}[c.f.~\cite{MR1227985}, Theorem II.3.1, p.~157]\label{theo:order}
If the Runge-Kutta method \eqref{eqn:erk} is of order $p$ and if $f(\lambda,x_k(\lambda))$ is $p$-times continuously differentiable, then we have
\[
\|x_k(\lambda_0 + h) - x_{1,k}\| \leq h^{p+1} \left( \frac{1}{(p+1)!} \max\limits_{t \in [0,1]} \|x_k^{(p+1)}(\lambda_0 + th)\| + \frac{1}{p!} \sum\limits_{i=1}^s |b_i| \max\limits_{t \in [0,1]} \|k_i^{(p)}(th)\| \right).\]
\end{theorem}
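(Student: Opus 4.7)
The plan is to compare the Taylor expansions in the step size $h$ of the exact solution $x_k(\lambda_0+h)$ and of the Runge--Kutta iterate $x_{1,k}$, viewed as a smooth function of $h$. Since the method is by assumption of order $p$, the two expansions must agree in all coefficients of $h^j$ up to $j=p$, so that the stated bound is produced solely by the two Lagrange remainders.

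Under the stated smoothness of $f$, Lemma~\ref{lem:regularity} guarantees that $x_k$ is $(p+1)$-times continuously differentiable, so Taylor's theorem with Lagrange remainder applied to $h\mapsto x_k(\lambda_0+h)$ yields
\[
x_k(\lambda_0+h) \;=\; \sum_{j=0}^{p}\frac{h^j}{j!}\,x_k^{(j)}(\lambda_0) \;+\; \frac{h^{p+1}}{(p+1)!}\,x_k^{(p+1)}(\lambda_0+\theta h)
\]
for some $\theta\in[0,1]$. Taking norms, the remainder is bounded by $\tfrac{h^{p+1}}{(p+1)!}\max_{t\in[0,1]}\|x_k^{(p+1)}(\lambda_0+th)\|$, which is exactly the first term on the right-hand side of the claim.

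Next I would Taylor-expand $x_{1,k}$ as a function of $h$. Writing $x_{1,k}(h) = x_{0,k} + h\sum_{i=1}^s b_i\,k_i(h)$ and noting that each stage $k_i(h)$ is $p$-times continuously differentiable in $h$ (since $f$ is $p$-times continuously differentiable along the solution and each $k_i$ is obtained from previously computed, already smooth, stages), Taylor's theorem applied to $k_i$ to order $p-1$ gives
\[
k_i(h) \;=\; \sum_{j=0}^{p-1}\frac{h^j}{j!}\,k_i^{(j)}(0) \;+\; \frac{h^p}{p!}\,k_i^{(p)}(\tau_i h)
\]
for some $\tau_i\in[0,1]$. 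Multiplying by $h$, summing against the $b_i$, and adding $x_{0,k}$ writes $x_{1,k}(h)$ as a polynomial of degree $p$ in $h$ plus the remainder $\tfrac{h^{p+1}}{p!}\sum_{i=1}^s b_i\,k_i^{(p)}(\tau_i h)$, whose norm is bounded by $\tfrac{h^{p+1}}{p!}\sum_{i=1}^s |b_i|\,\max_{t\in[0,1]}\|k_i^{(p)}(th)\|$, producing the second term of the target estimate.

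The remaining step is to argue that the two degree-$p$ Taylor polynomials obtained above coincide term by term. This is forced by the definition of order $p$: their difference is a polynomial in $h$ whose norm, by order of the method, is $O(h^{p+1})$ as $h\to 0$ on any sufficiently smooth test problem, and a polynomial in $h$ with this property must vanish identically. Subtracting the two expansions and applying the triangle inequality to the two remainder terms then gives the claim. The main obstacle is the regularity bookkeeping around this matching-of-coefficients step: one has to verify that the hypothesis on $f(\lambda,x_k(\lambda))$ together with Lemma~\ref{lem:regularity} and the recursive definition of the stages really supplies the $p$-fold continuous differentiability of each $k_i(h)$ and of $x_k(\lambda_0+\cdot)$ needed for both Taylor expansions and for the suprema appearing in the bound to be well defined.
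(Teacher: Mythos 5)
Your argument is correct and is essentially the standard proof of Theorem~II.3.1 in the cited reference \cite{MR1227985}; the paper itself does not reprove this statement but simply cites it, so your Taylor-expansion-plus-order-condition-matching route is exactly the intended one. The only cosmetic point is that for vector-valued $x_k$ and $k_i$ the Lagrange remainder with a single intermediate point $\theta$ (resp.\ $\tau_i$) need not hold for all components simultaneously; using the integral form of the remainder instead gives directly the bounds $\frac{h^{p+1}}{(p+1)!}\max_{t\in[0,1]}\|x_k^{(p+1)}(\lambda_0+th)\|$ and $\frac{h^{p+1}}{p!}\sum_{i=1}^s|b_i|\max_{t\in[0,1]}\|k_i^{(p)}(th)\|$, so the stated estimate is unaffected.
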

So, for the present $f$ in order for an order $p$ Runge-Kutta method to be applicable it has to be continuously differentiable $p$ times. This is the case, if the objective functions $J_0$ and $J_1$ are $(p+2)$ times continuously differentiable.
Theorem~\ref{theo:order} holds for each step of the Runge-Kutta method, so for $j=1,\dots,N$ and using $x_{j-1,k}$ as initial value in step $j$ we have estimates
\begin{equation}\label{eqn:rk_local_error_estimates}
    \|e_j\| := \|x_k(\lambda_0 + j h) - x_{j,k}\| \leq C h^{p+1}.
\end{equation}
Using similar ideas as those that were used to prove Proposition~\ref{prop:eps-crit_front} and using the fact that due to Lemma~\ref{lem:f_is_Lipshitz} $f$ is Lipschitz in $\lambda$ we can show the following:
\begin{theorem}[c.f.~\cite{MR1227985}, Theorem II.3.4, p.~160]
Let $\mathcal{U}$ be a neighborhood of $\{(\lambda,x_k(\lambda))| \lambda \in I\}$ where $x_k(\lambda)$ is the exact solution of \eqref{eqn:bicriteria_ode_for_xk} and $I$ as defined in Section~\ref{sec:Paretotracing}. Suppose that in $\mathcal{U}$
\[
\left\|\frac{\partial f}{\partial x}\right\| \leq L_{f_x}
\]
and that the local estimates \eqref{eqn:rk_local_error_estimates} hold in $\mathcal{U}$. Then the global error
\[
E = x_k(\lambda_u) - x_{N,k}
\]
can be estimated as
\[
\|E\| \leq h^{p} \frac{C}{L_{f_x}} \left(e^{L_{f_x}|I|}-1\right),
\]
given that $h$ is small enough to remain in $\mathcal{U}$.
\end{theorem}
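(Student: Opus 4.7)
The plan is to combine the per-step (local) Runge-Kutta error bound \eqref{eqn:rk_local_error_estimates} with a discrete Grönwall argument that propagates errors through the $N$ steps, much in the spirit of the continuous estimate used in Proposition~\ref{prop:eps-crit_front}(ii).

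First I would set up notation: write $e_j := x_k(\lambda_0 + jh) - x_{j,k}$ for $j = 0, \ldots, N$ with $e_0 = 0$, and let $\tilde{x}_{j,k}$ denote the result of performing one Runge-Kutta step starting from the \emph{exact} value $x_k(\lambda_0 + (j-1)h)$ at time $\lambda_0 + (j-1)h$. Then I decompose
\[
e_j = \bigl(x_k(\lambda_0 + jh) - \tilde{x}_{j,k}\bigr) + \bigl(\tilde{x}_{j,k} - x_{j,k}\bigr).
\]
The first summand is the local truncation error committed in step $j$, bounded by $Ch^{p+1}$ by \eqref{eqn:rk_local_error_estimates}, provided the computation stays in $\mathcal{U}$. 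For the second summand I would expand the Runge-Kutta increment step by step in the stage values $k_1, \ldots, k_s$, use the Lipschitz bound $\|\partial f/\partial x\| \leq L_{f_x}$ in $\mathcal{U}$ on each stage, and conclude that $\|\tilde{x}_{j,k} - x_{j,k}\| \leq (1 + h L_{f_x} B(h))\, \|e_{j-1}\|$ for some bounded factor $B(h)$ depending on the Butcher tableau; absorbing $B(h)$ into the constant and using the standard refinement $1 + hL \leq e^{hL}$, this gives the clean recursion
\[
\|e_j\| \leq e^{h L_{f_x}} \|e_{j-1}\| + C h^{p+1}.
\]

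Iterating this inequality from $j = 0$ (with $e_0 = 0$) and summing the resulting geometric series yields
\[
\|e_N\| \leq C h^{p+1} \sum_{j=0}^{N-1} e^{j h L_{f_x}} = C h^{p+1}\,\frac{e^{N h L_{f_x}} - 1}{e^{h L_{f_x}} - 1} \leq \frac{C h^p}{L_{f_x}}\bigl(e^{L_{f_x}|I|} - 1\bigr),
\]
using $e^{hL_{f_x}} - 1 \geq hL_{f_x}$ and $Nh = |I|$. Since $\lambda_0 + Nh = \lambda_u$ this is exactly the claimed bound on $E = e_N$.

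The main obstacle, and the point I would be most careful about, is the same one already flagged implicitly in the theorem's hypothesis ``given that $h$ is small enough to remain in $\mathcal{U}$'': I have to ensure that all the iterates $x_{j,k}$ and all the intermediate stage evaluations $x_{j-1,k} + h\sum a_{i,\ell}k_\ell$ actually lie in $\mathcal{U}$, because both the local error estimate \eqref{eqn:rk_local_error_estimates} and the Lipschitz bound on $\partial f/\partial x$ are only available there. The clean way to handle this is an inductive argument: assuming $\|e_{j-1}\| \leq \frac{Ch^p}{L_{f_x}}(e^{L_{f_x}|I|}-1)$ already places $x_{j-1,k}$ close to $x_k(\lambda_0+(j-1)h)$, and then for $h$ sufficiently small the stage points remain inside $\mathcal{U}$ by continuity, so the recursion closes. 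Apart from this bookkeeping, the estimate is a routine discrete Grönwall computation analogous to the one used in Proposition~\ref{prop:eps-crit_front}.
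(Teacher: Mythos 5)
Your argument is essentially correct, but note that the paper itself gives no proof of this statement: it is quoted directly from Hairer--N\o rsett--Wanner (Theorem II.3.4), with only the remark that $f$ has the required Lipschitz property by Lemma~\ref{lem:f_is_Lipshitz}. Your route differs from the cited textbook proof in one structural respect. You propagate the error through the \emph{discrete} Runge--Kutta map, deriving the recursion $\|e_j\|\leq e^{hL_{f_x}}\|e_{j-1}\|+Ch^{p+1}$ and summing the geometric series; the reference instead uses ``Lady Windermere's fan'': it writes the global error as a telescoping sum of differences of \emph{exact} solution curves launched from consecutive numerical points, bounds each transported local error by $Ch^{p+1}e^{L_{f_x}(\lambda_u-\lambda_j)}$ via the continuous dependence estimate (the same Gronwall bound as in Proposition~\ref{prop:eps-crit_front}(ii)), and compares the sum with the integral $\int e^{L_{f_x}(\lambda_u-s)}\,\mathrm{d}s$. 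The fan construction buys you the constant $L_{f_x}$ in the exponent exactly, because the transport happens along solutions of the ODE itself. Your discrete-stability step is the one place where your write-up is slightly loose: the Lipschitz constant of one explicit $s$-stage step is $1+hL_{f_x}\sum_i|b_i|\bigl(1+O(hL_{f_x})\bigr)$, so after absorbing $B(h)$ you really obtain $e^{hL_{f_x}\Theta(h)}$ with $\Theta(h)=\sum_i|b_i|+O(h)$ rather than $e^{hL_{f_x}}$; the stated bound with $L_{f_x}$ on the nose then only follows either for methods with $\sum_i|b_i|=1$ up to an $O(h)$ perturbation of the exponent, or by switching to the fan argument. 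This is a standard and minor discrepancy, and your handling of the ``remain in $\mathcal{U}$'' bookkeeping by induction is exactly right, but if you want the inequality literally as stated you should transport the local errors along exact trajectories rather than through the numerical flow.
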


In our numerical experiments we are using 2nd-order and 4th-order Runge-Kutta methods.

The simple 2nd-order method is given by
\[
    x_{1,k} = x_{0,k} + h f(0+\frac{h}{2},x_{0,k} + \frac{h}{2} f(0,x_{0,k})).
\]
The 4th-order accurate classical Runge-Kutta method (or RK4-method) is given by
\begin{align*}
    k_1 & = f(\lambda_0,x_{0,k})\\
    k_2 & = f(\lambda_0 + \frac{h}{2},x_{0,k}+\frac{k_1}{2}) \\
    k_3 & = f(\lambda_0 + \frac{h}{2},x_{0,k}+\frac{k_2}{2}) \\
    k_4 & = f(\lambda_0 + h,x_{0,k} + k_3) \\
    x_{1,k} & = x_{0,k} + h\left(\frac{1}{6} k_1 + \frac{2}{6} k_2 + \frac{2}{6} k_3 + \frac{1}{6} k_4\right),
\end{align*}
so it is obtained by choosing:
\[
\begin{array}{c|cccc}
0 & & & & \\
c_2 = \frac{1}{2} & a_{2,1} = \frac{1}{2} & & & \\
c_3 = \frac{1}{2} & a_{3,1} = 0 & a_{3,2} = \frac{1}{2} & & \\
c_4 = 1 & a_{4,1} = 0 & a_{4,2} = 0 & a_{4,3} = 1 & \\
\hline
& b_1 = \frac{1}{6} & b_2 = \frac{1}{3} & b_3 = \frac{1}{3} & b_4 = \frac{1}{6} 
\end{array}
\]
This representation is known as \emph{Butcher tableau} \cite{MR1227985}. RK4 is of order $p = 4$.

To obtain the Pareto front numerically using an arbitrary explicit Runge-Kutta scheme for a given starting point $\lambda_0$, we use Algorithm~\ref{alg:pareto_front_tracing}.

\begin{algorithm}
\DontPrintSemicolon
\SetKwInOut{Input}{Input}
\SetKwInOut{Output}{Output}
\Input{start point $(\lambda_0,x_k(\lambda_0))$, number of integration points $N$, number of steps $s$ and parameters $a_{i,\ell}, b_i, c_i$, $i=1,\dots,s$, $\ell=1,\dots,i-1$ of the chosen explicit Runge-Kutta method}
\Output{approximations to points on the Pareto front $(\lambda_j,x_k(\lambda_j))$, $j=1,\dots,N$}
h = $(\lambda_u - \lambda_0)/N$\;
\For{$j = 1,\dots,N$}{
    \For{$i = 1,\dots,s$}{
        $k_i = f(\lambda_0 + jh + \sum\limits_{\ell=2}^i c_\ell h, x_{j-1,k} + h \sum\limits_{\ell=1}^{i-1} a_{i,\ell} k_i )$\;
    }
    $x_{j,k} = x_{j-1,k} + h \sum\limits_{\ell=1}^s b_\ell k_\ell$\;
}
\caption{Pareto front tracing}
\label{alg:pareto_front_tracing}
\end{algorithm}
\begin{remark}
Analogously to traditional time integration, we have used only integration forward in $\lambda$, here. The Pareto front can also be traced backwards by going from $\lambda_0$ up to $\lambda_l$ and reverting the $\lambda$-direction using proper scaling.
\end{remark}

\section{Numerical results}\label{sec:results}

The approach presented in the previous sections is tested on a simple bi-criteria convex quadratic optimization problem (Section~\ref{subsec:QP}) as well as on a case study in bi-criteria shape optimization (Section~\ref{subsec:shapes}).

\subsection{Pareto tracing for bi-criteria convex quadratic optimization}\label{subsec:QP}

We first consider an unconstrained and strictly convex bi-criteria optimization problem with two quadratic objective functions $J_i:\R^n\rightarrow\R$, $i=0,1$ given by
$J_i(x)=\frac{1}{2}(x-\chi_i)^TQ_i(x-\chi_i)$ with 
strictly positive definite matrices $Q_i\in\R^{n\times n}$ and arbitrary but fixed vectors $\chi_i\in\R^n$. 
Since this problem allows for an analytic description of the Pareto optimal set (see, for example, \cite{tour:onbi:2019}), 
it is particularly well-suited to evaluate the quality of approximated Pareto fronts. 

Indeed, due to the strict convexity of $J_i$, $i=0,1$, every Pareto optimal solution must be the unique optimal solution $x(\lambda)$ of a weighted sum scalarization $J_\lambda=(1-\lambda)J_0+\lambda J_1$ with $\lambda\in[0,1]$, satisfying the first-order optimality condition $\nabla_x J_\lambda(x(\lambda))=0$. Conversely, since the Hessian $\nabla_x^2 J_\lambda(x(\lambda))=(1-\lambda)Q_0+\lambda Q_1$ is positive definite for all $\lambda\in[0,1]$ (irrespective of $x(\lambda)$), every such solution $x(\lambda)$ satisfies the second order optimality condition strictly and is thus Pareto optimal. 
The first order optimality condition yields an explicit formula for the solution $x(\lambda)$ as a function of $\lambda\in[0,1]$:
\begin{align}
& \nabla_x J_{\lambda}(x(\lambda)) = (1-\lambda)Q_0(x(\lambda)-\chi_0) + \lambda Q_1(x(\lambda)-\chi_1) = 0 \notag \\
\Leftrightarrow \quad & x(\lambda) = [(1-\lambda) Q_0 + \lambda Q_1]^{-1} ((1-\lambda) Q_0\chi_0 + \lambda Q_1\chi_1). \label{eqn:QPexact}
\end{align}
The two limiting points of the Pareto optimal set are obtained as $x(0)=\chi_0$ (the unique minimum of $J_0$) and $x(1)=\chi_1$ (the unique minimum of $J_1$).

In order to trace the Pareto front using numerical integration as described in Sections~\ref{sec:Paretotracing} and \ref{sec:numericalintegration} above, the first order optimality conditions are differentiated w.r.t.\ $\lambda$, yielding the implicit ODE \eqref{eqn:implicit_bicriteria_ode} as 
\begin{equation*}
\frac{d}{d\lambda} \nabla_x J_\lambda(x(\lambda)) =  0 \quad\Leftrightarrow\quad 
((1-\lambda) Q_0 +\lambda Q_1)\dot{x}(\lambda) - Q_0(x(\lambda)-\chi_0) + Q_1(x(\lambda)-\chi_1) =  0
\end{equation*}
for $\lambda\in[0,1]$. Since $\nabla^2_x J_\lambda(x(\lambda))$ is positive definite, this can be rearranged to a standard ODE \eqref{eqn:bicriteria_ode} as follows:
\begin{equation}\label{eqn:QPDGL}
\dot{x}(\lambda) = ((1-\lambda)Q_0+\lambda Q_1)^{-1}
(Q_0(x(\lambda)-\chi_0) - Q_1(x(\lambda)-\chi_1)) = f(\lambda,x(\lambda)),\end{equation}
with possible initial values $x_0=x(\lambda_0)=\chi_0$ (for $\lambda_0=0$) or $x_0=x(\lambda_0)=\chi_1$ (for  $\lambda_0=1$).  

\begin{remark}
Since $\nabla^2_xJ_\lambda(x)=(1-\lambda)Q_0+\lambda Q_1$ is independent of $x$, and since it is positive definite for all $\lambda\in[0,1]$, its smallest eigenvalue $\Lambda(\lambda,x)$ is bounded below by a constant $\varepsilon>0$ on $[0,1]$, i.e., for $I=[0,1]$ we have  $\Lambda(I)=\inf_{\lambda\in I}\Lambda(\lambda,x(\lambda))\geq\varepsilon>0$. This implies that we can choose uniform constants $L_\sharp$, $\sharp= H,f,\lambda$, in Lemma~\ref{lem:f_is_Lipshitz} and Proposition~\ref{prop:eps-crit_front} where $L_H$ can be set to zero. Moreover, the above analysis shows that $f(\lambda,x(\lambda))$ is of class $C^\infty$ and hence high order iteration schemes are possible in this case, see Theorem~\ref{theo:order} and   Lemma~\ref{lem:regularity} above.
\end{remark}

Following \cite{tour:onbi:2019}, we generate random matrices $Q_{0}$ and $Q_{1}$, by $Q_j=M_j^\top M_j$, where $M_j$ is a sample from a $n\times n$-random matrix with independent standard normal distributed entries, $j=0,1$. Likewise, $\chi_j$ are $n$-dimensional random vectors with independent standard normal entries. We provide numerical tests for dimension $n=100$.  In Figure~\ref{fig:paretoCQMP} we compare the analytic solution $x(\lambda)$ with numerical solutions obtained from integrating \eqref{eqn:QPDGL} numerically with initial value $ x_0=x(0.5)$ given by the exact solution obtained from \eqref{eqn:QPexact}. We also apply a simple, gradient based descent algorithm using the Armijo rule (with parameter $\rho=0.5$) starting at $x_{0,0}=0$ in order to obtain approximate starting points $x_{0,k}$ to integrate for approximate Pareto solutions, as described in Proposition~\ref{prop:eps-crit_front}(ii).  
\begin{figure}[t]
    \centering
    \includegraphics[scale=.5]{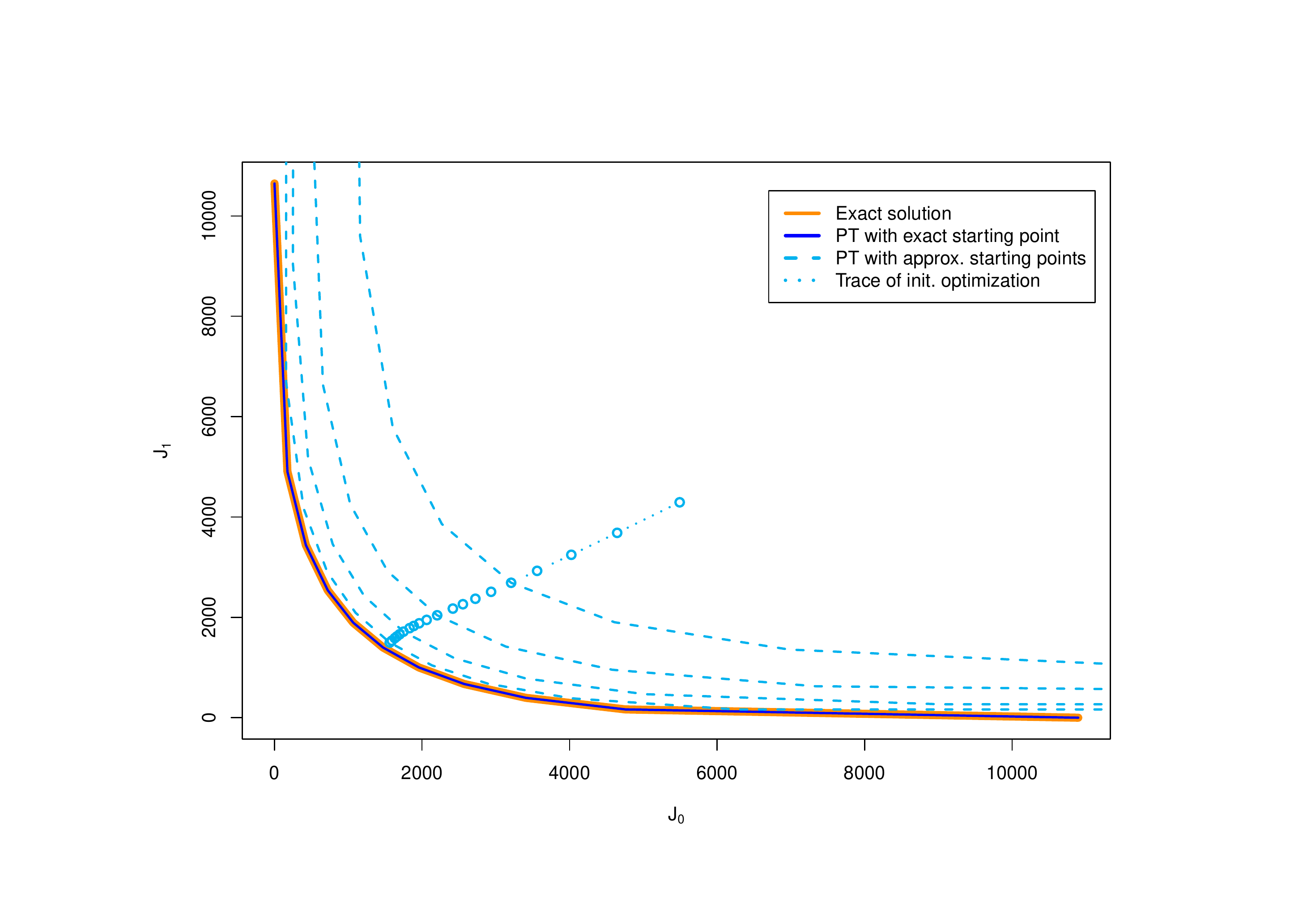}
    \caption{Comparison of the analytic solution \eqref{eqn:QPexact} for the Pareto front (orange, thick solid) with the objective values of the numerically integrated solution of the ODE \eqref{eqn:QPDGL} started at the exact solution (solid blue, $\lambda_0=0.5$) and integrated solutions (dashed light blue, $\lambda_0=0.5$) started at the 5th, 10th, 15th and 20th iteration of a gradient descent algorithm starting at $x_{0,0}=0$ (dotted light blue). The dimension of the problem is $n=100$. Numerical integration of the ODE uses 4th order Runge-Kutta method with $20$ iterations, $10$ in each direction (step length $h=0.05$).}
    \label{fig:paretoCQMP}
\end{figure}

\subsection{Pareto tracing for bi-criteria shape optimization}\label{subsec:shapes}
 In the following, the Pareto tracing approach is applied to  the bi-criteria shape optimization of a ceramic component under tensile load presented in  \cite{Doganay2019}. 
 As optimization criteria we consider the volume of the component on one hand, and its reliability on the other hand. Here, the reliability of the component is assessed via its probability of failure as introduced in \cite{bolten} and implemented for 2D shapes in \cite{PaperHahn}. 
We refer to \cite{Doganay2019} for a detailed derivation of the model and provide a brief summary below. 

Let $\Omega\subset \R^2$ be a compact body that is filled with ceramic material and has a piecewise Lipschitz boundary. We additionally assume that the boundary $\partial\Omega$ of $\Omega$ 
consists of three parts
\begin{align*}
	\partial\Omega = \text{cl}({\partial\Omega}_D) \cup \text{cl}({\partial\Omega}_{N_{\text{fixed}}}) \cup \text{cl}({\partial\Omega}_{N_{\text{free}}}),
\end{align*}
where on $\partial\Omega_D$ the Dirichlet boundary condition holds, the surface forces may act on $\partial\Omega_{N_{\text{fixed}}}$, and $\partial\Omega_{N_{\text{free}}}$ is the part that can be altered in an optimization approach. We further assume that a bounded open set $\widehat{\Omega} \subset \R^2$ that satisfies the \emph{cone property}, see, e.g., \cite{bolten}, contains all feasible shapes, see Figure~\ref{fig:adm_shape} for an example.

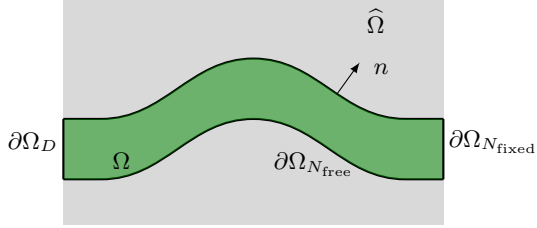
\begin{figure}[htb]
  \centering
  \begin{tikzpicture}[font={\footnotesize}]
    \coordinate (bb_ll) at (0,0.4);
    \coordinate (bb_lr) at (5,0.4);
    \coordinate (bb_tl) at (0,3.4);
    \coordinate (bb_tr) at (5,3.4);
    \coordinate (bar_ll) at (0,1.);
    \coordinate (bar_lr) at (5,1.);
    \coordinate (bar_tl) at (0,1.8);
    \coordinate (bar_tr) at (5,1.8);
    \fill [fill=black!15!white] (bb_ll) rectangle (bb_tr);
    \draw [thick,name path=A] (bar_ll) to[out=0,in=180] (0.5,1) to[out=0,in=180] (2.5,1.8) to[out=0,in=180] (4.5,1) to[out=0,in=180] (bar_lr);
    \draw [thick,name path=B] (bar_tl) to[out=0,in=180] (0.5,1.8) to[out=0,in=180] (2.5,2.6) to[out=0,in=180] (4.5,1.8) to[out=0,in=180] (bar_tr);
    \tikzfillbetween[of=A and B] {black!45!green, opacity=0.5};
    \node[label=left:\(\widehat{\Omega}\)] at (4.5,3.1) {};
    \node[label=left:\(\Omega\)] at (1.15,1.25) {};
    \node[label=right:\(\partial\Omega_{N_{\text{fixed}}}\)] at (4.8,1.5) {};
    \node[label=right:\(\partial\Omega_{N_{\text{free}}}\)] at (2.5,1.2) {};
    \node[label=left:\(\partial\Omega_{D}\)] at (0.2,1.5) {};
    \draw [thick] (bar_ll) -- (bar_tl) ;
    \draw [thick] (bar_lr) -- (bar_tr) ;
    \draw [-latex] (3.6,2.13) -- (3.9,2.55) node [label={[xshift=8pt,yshift=-12pt]\({n}\)}] {} ;
  \end{tikzpicture}
  \caption{Illustration of a possible admissible shape \(\Omega\in \Oad\), see also Figure~1 in \cite{Doganay2019}.\label{fig:adm_shape}}
\end{figure}
An admissible shape is then defined as an element of the set
\[\Oad:=\{\Omega \subset \widehat{\Omega}:\; {\partial\Omega}_D \subset {\partial\Omega},\; {\partial\Omega}_{N_{\text{free}}} \subset \partial\Omega,\; \widehat{\Omega} \text{ and }\Omega\text{ satisfy the cone property}\}.\]

Since ceramics behave according to linear elasticity theory, see, e.g., \cite{braess,munz}, one can express the state equation which describes the behaviour of the ceramic component under external forces like tensile load by an elliptic partial differential equation as follows:
\begin{equation} \label{stateequation}
\begin{array}{rcll}    
  -\text{div}(\sigma(u(z))) & = & \bar{f}(z) &  \text{for} \; z\in\Omega \\    
  u(z) & = & 0 &  \text{for} \; z\in\partial\Omega_D    \\
 \sigma(u(z)){n}(z) & = & \bar{g}(z) &  \text{for} \; z\in\partial\Omega_{N_{\text{fixed}}} \\
  \sigma(u(z)){n}(z) & = & 0 & \text{for} \; z\in\partial\Omega_{N_{\text{free}}} 
\end{array}
\end{equation}
Here, $\bar{f}\in L^2(\Omega , \R^2)$ represents the volume forces and $\bar{g} \in L^2(\partial\Omega_{N_{\text{fixed}}} , \R^2)$ the forces acting on the surface $\partial \Omega_{N_{\text{fixed}}}$. The resulting displacement of the component is given by $u\in H^1(\Omega, \R^2)$, and $D u$ denotes the Jacobian of $u$. Then the linear strain tensor $\varepsilon\in L^2(\Omega, \R^{2\times 2})$ has the form $\varepsilon(u(z)):=\frac{1}{2}(D u(z) + (D u(z))^{\top})$. With the Lam\'e constants $\hat{\lambda}=\frac{\nu E}{(1+\nu)(1-2\nu)}$ and $\hat{\mu}=\frac{E}{2(1+\nu)}$ obtained from Young's modulus $E$ and Poisson's ratio $\nu$ the
stress tensor $\sigma\in L^2(\Omega, \R^{2\times 2})$ is given by $\sigma(u(z))=\hat{\lambda}\, \text{tr}(\varepsilon(u(z)))I+ 2\hat{\mu}\varepsilon(u(z))$. The outward pointing normal at $z\in\partial\Omega$ is denoted by ${n}(z)$ and is defined nearly everywhere on $\partial \Omega$.

Now the considered \emph{bi-criteria shape optimization problem} can be formulated as
\begin{equation}
\begin{split}
\min_{\Omega \in \Oad} & ~J(\Omega):=(J_0(\Omega),J_1(\Omega))\\
\text{s.t. } & u \in H^1(\Omega, \R^2) \text{ solves the state equation } (\ref{stateequation}),
\end{split}\label{ceramicMOP}
\end{equation}
where $J_0(\Omega):=\int_{\Omega}\text{d}z$ denotes the volume of the shape $\Omega\in\Oad$ and  $J_1(\Omega)$ is an intensity measure modelling its probability of failure. 
Following \cite{bolten, Doganay2019} we model the probability of failure as an intensity measure of a Poisson point process which counts the ``critical'' cracks in a ceramic component, where ``critical'' means that these cracks may initiate ruptures under tensile load. This leads to the following Weibull-type functional which is used as the second objective in our study: 
\begin{align*}
J_1(\Omega):=\frac{1}{2\pi}\int\limits_{\Omega}\int\limits_{S^1}\left(\frac{\Bigl(n^\top \sigma(Du(z))n \Bigr)^+}{\sigma_0}\right)^m \text{d}n\,\text{d}z .
\end{align*}
Here, $(\cdot)^+:=\max(\cdot, 0)$, $S^1$ is the unit sphere in $\R^2$, $\sigma_0$ is a positive constant and the parameter $m$ is called \emph{Weibull module} and  typically assumes values between $5$ and $25$. We refer to \cite{bolten} for further details.

The implementation of \cite{PaperHahn} is used to evaluate the objectives and gradients. It is based on standard Lagrangian finite elements to discretize a two-dimensional shape $\Omega \in \Oad $ by an $n_x\times n_y$ finite element mesh \(Z:=(Z^{\Omega}_{ij})_{n_x\times n_y}\). 
Numerical quadrature is used to calculate all occurring integrals and an adjoint approach is utilized to speed up the computation of the gradients. 
We adopt the geometry definition of \cite{Doganay2019} 
that takes advantage of the geometry of the considered shapes to reduce the number of variables. In a first step, all $x$-components of the grid points are fixed and \emph{mean line} and \emph{thickness} values  $\varrho^{\text{ml}}\in\R^{ n_x}$ and $\varrho^{\text{th}}\in\R^{ n_x}_+$
are used to represent the discretized shape $Z$. Since we observed in \cite{Doganay2019} that, when starting from a reasonable initial solution, nonnegativity constraints on the thickness values are automatically satisfied during all iterations of the optimization process we omit these constraints in the following. The numerical tests presented below confirm this observation.  
In a second step, these meanline and thickness values are fitted with B-splines with a prespecified number of  $n_B$  
basis functions $\vartheta_j,\ j=1,\hdots, n_B$, yielding smoothed meanline and thickness values by computing
\begin{equation*}
\hat{\varrho}^{\text{ml}}(z):=\sum_{j=1}^{n_B} x^{\text{ml}}_j \, \vartheta_j(z) \quad \text{and}\quad \hat{\varrho}^{\text{th}}(z):=\sum_{j=1}^{n_B} x^{\text{th}}_j \, \vartheta_j(z), \qquad z\in \R,
\end{equation*}
see, e.g., \cite{nurbs}. 
The B-spline coefficients $x=(x^{\text{ml}}, x^{\text{th}}) \in \R^{ n_B}\times \R^{ n_B}_+$ are then used as optimization variables in \eqref{ceramicMOP}, replacing $J_i(\Omega)$ by $J_i(Z)\approx J_i(x)$, $i=0,1$. 

In order to trace the Pareto front of the bi-criteria shape optimization problem \eqref{ceramicMOP} with the methodology described in the previous sections, we first have
to compute the right hand side $f(\lambda,x(\lambda))$ of \eqref{eqn:bicriteria_ode} for problem \eqref{ceramicMOP}. Towards this end, the $n_B \times n_B$ Hessian matrix $\nabla^2_xJ_i(x)$, 
$i=0,1$, is approximated by the finite difference method with a precision of $\varepsilon_H=10^{-6}$. Note that this can be done in parallel. For the stability of our approach under an approximate evaluation of $f(\lambda,x(\lambda))$, we refer to Proposition \ref{prop:eps-crit_front} (iii). 
For the numerical solution of the ODE \eqref{eqn:bicriteria_ode} we apply an order $2$ Runge-Kutta method, thus requiring that the discretized objective functions $J_i$, $i=0,1$, are at least $4$-times continuously differentiable (c.f.\ Theorem~\ref{theo:order}). 
This is clearly satisfied for the discretized volume $J_0(x)$ which is, as a polynomial, infinitely differentiable. For the discretized intensity measure $J_1(x)$, we can build on the analysis for $J_1(Z)$ performed in \cite{PaperHahn,gottschalk2019shape}. Here, the discretized state equation \eqref{stateequation} is of the form $B(Z)\,U(Z)=F(Z)$, where $U(Z)$ is the discretized displacement, $B(Z)$ the positive definite stiffness matrix and $F(Z)$ the discretized forces. From the assembly of $B(Z)$ and $F(Z)$ in \cite{PaperHahn,gottschalk2019shape} it can be seen that $B(Z),F(Z) \in C^{\infty}$. Using the identity $U(Z)=B(Z)^{-1}F(Z)$, where the right hand side is infinitely differentiable, it can be shown  iteratively that also $U(Z) \in C^{\infty}$. Moreover, in \cite[Lemma~6.5.5]{Bittner} it is shown that $\zeta(\sigma)=((n^\top \sigma\, n)^+)^m$ is $m$ times continuously differentiable w.r.t.\ $\sigma$. We can conclude that this is also the case for $J_1(Z)$. The order $2$ Runge-Kutta method is hence applicable for Weibull modules $5 \leq m\leq 25$.

Initial values for Pareto front tracing can be obtained by any of the methods suggested in \cite{Doganay2019}. In the following case study, one  weighted sum scalarization of the bi-criteria shape optimization problem \eqref{ceramicMOP} is solved using a gradient descent algorithm with Armijo step lengths.

\subsubsection*{Test Cases}
We consider the same 2D test cases that were investigated in \cite{Doganay2019} to obtain comparable results. The two 2D shapes are made from ceramic beryllium oxide (BeO) and are under tensile load. The material parameters of BeO are set according to \cite{munz,crcmaterials}, i.e., Poisson's ratio is set to $\nu=0.25$, Young's modulus to $\texttt{E}=320\,\text{GPa}$ and the ultimate tensile strength to $140\,\text{MPa}$. We choose $m=5$ for the Weibull module. Moreover, both shapes have a fixed height of $0.2\,\text{m}$ on the left and right boundaries and a fixed length of  $1.0\,\text{m}$. The left boundary corresponds to the Dirichlet boundary $\partial\Omega_{D}$, i.e., the boundary is fixed and no forces act on it, while the right boundary is also fixed but corresponds to the Neumann boundary $\partial\Omega_{N_{\text{fixed}}}$, i.e., the surface forces $\bar{g}$ act on that boundary. The remaining upper and lower boundaries correspond to the part that is force free, i.e., $\partial\Omega_{N_{\text{free}}}$, which can be modified during the optimization process.  Following \cite{Doganay2019}, we neglect the gravity forces (i.e., $\bar{f} = 0$) and set the tensile load to $\bar{g} = 10^{7}\,\text{Pa}$. 

Note that individual optima for $J_0$ and $J_1$ do not exist under these assumptions. Indeed, the infimum of the volume $J_0$ is zero, and hence optimal shapes do not exist when minimizing $J_0$ without any additional constraints. Conversely, when considering solely the probability of failure $J_1$, then the reliability can always be improved when increasing the volume (as long as we neglect gravity forces). As a consequence, the weighted sum scalarization $J_{\lambda}$ can only have solutions for weights $\lambda\in(0,1)$, where we can expect problems the closer $\lambda$ gets to either boundary of this interval. This is confirmed by the numerical tests presented below.

The shapes are discretized using a triangular $41\times 7$ mesh, i.e.,  $n_x=41$ and $n_y=7$. Meanline and thickness values are fitted with B-splines with $n_B=5$ basis functions, yielding ten B-spline coefficients in total. Since the coefficients that correspond to the fixed boundaries are fixed, this results in six optimization variables, c.f.\  \cite{Doganay2019}. All numerical experiments are realized in R (version 3.5) using the implementation of \cite{PaperHahn} to compute the objective values and the (adjoint) gradients on the mesh. 
We use the implementation of the Runge-Kutta method provided by the R package ``deSolve'' to solve the resulting ODE.

\subsubsection*{Test Case 1: A Straight Joint} 
\label{subsubsec:StraightJoint}

For the first test case we fix the left and right boundaries at the same height and apply the surface forces $\bar{g}$ on the right boundary. Under these circumstances, straight rods with varying thickness that connect the boundaries can be expected as solutions of the bi-criteria shape optimization problem \eqref{ceramicMOP}. The numerical studies in \cite{Doganay2019} support this intuition, see Figure~\ref{fig:TC1_shapes} for some exemplary results.

\begin{figure}[ht]
	\begin{center}
		\subfloat[$\lambda=0.2$ ]{\includegraphics[width=0.25\textwidth]{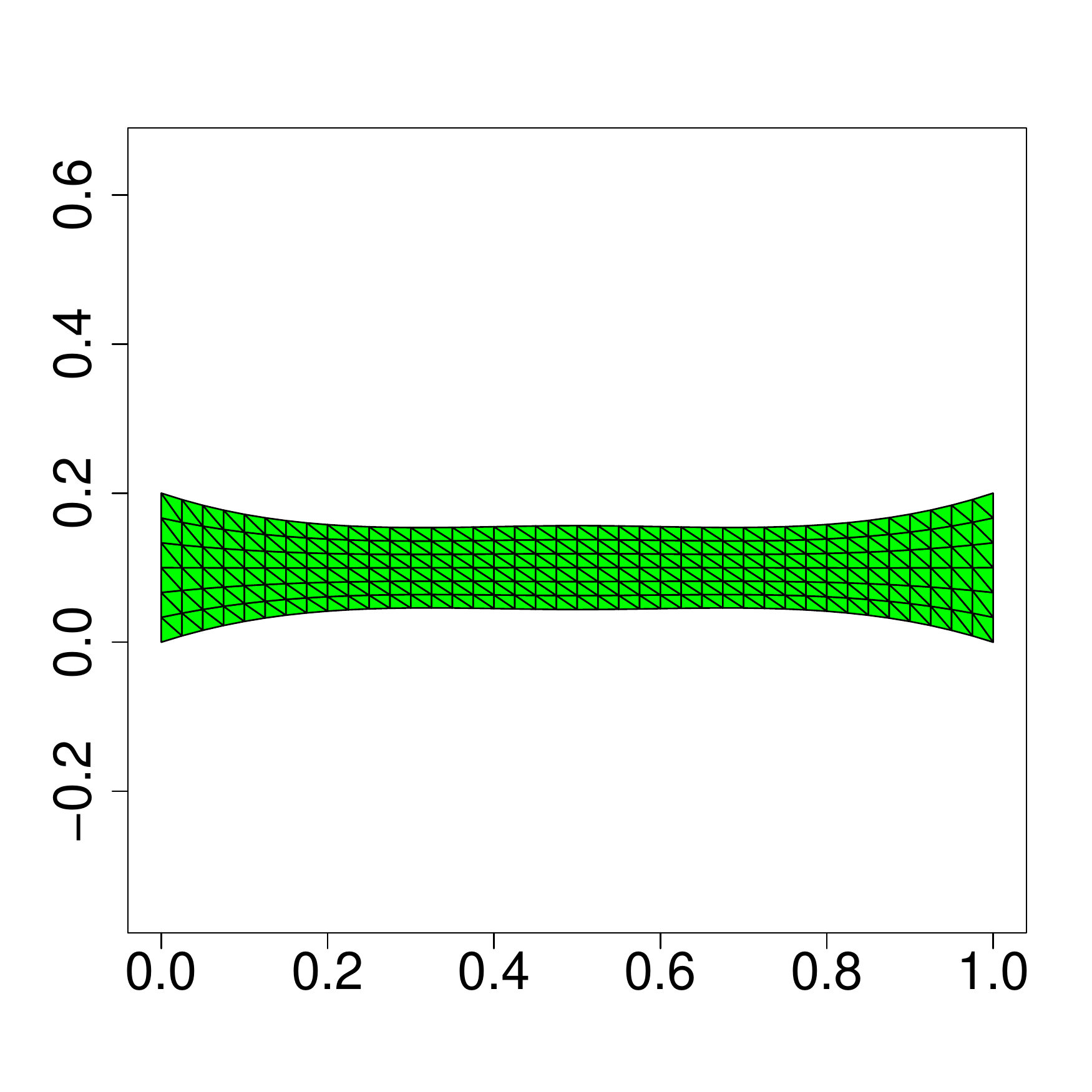}}
		%\hspace{\fill}
		\subfloat[$\lambda=0.5$
		]{\includegraphics[width=0.25\textwidth]{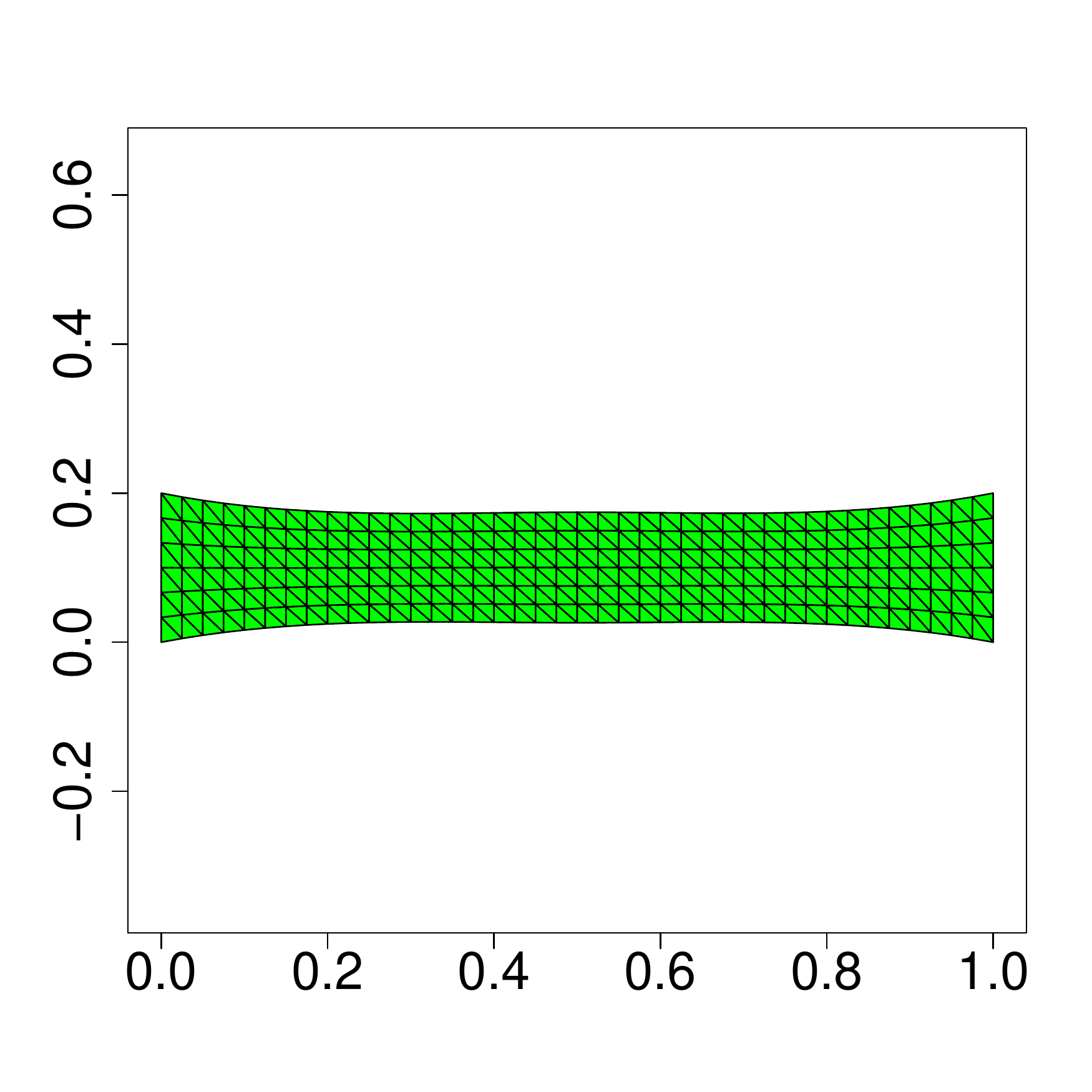}}
	    %\hspace{\fill}
		\subfloat[$\lambda_0=0.813;\  x_{0}$ \label{fig:TC1_ODE_straight} ]{\includegraphics[width=0.25\textwidth]{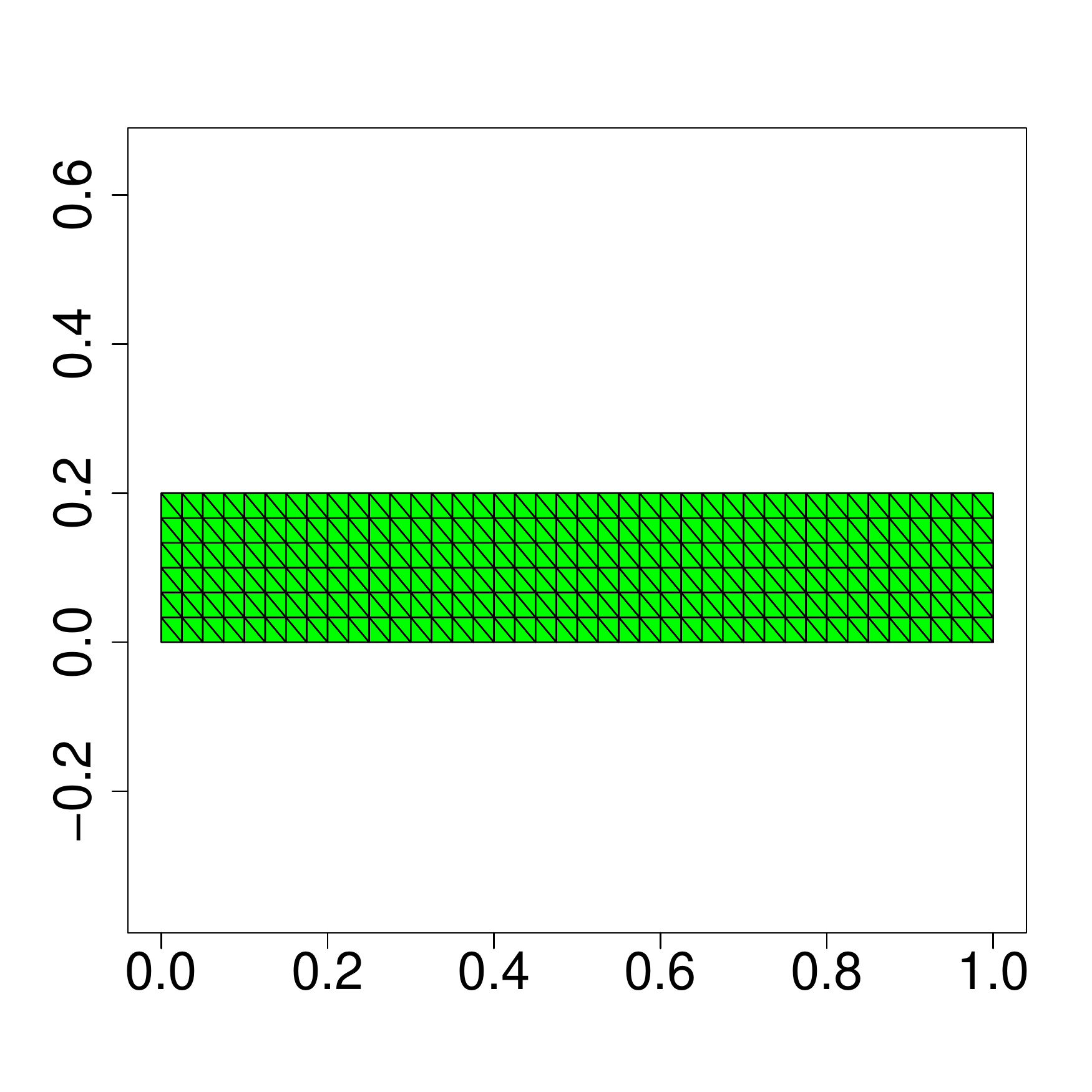}}
		%\hspace{\fill}
		\subfloat[$\lambda=0.9$
		]{\includegraphics[width=0.25\textwidth]{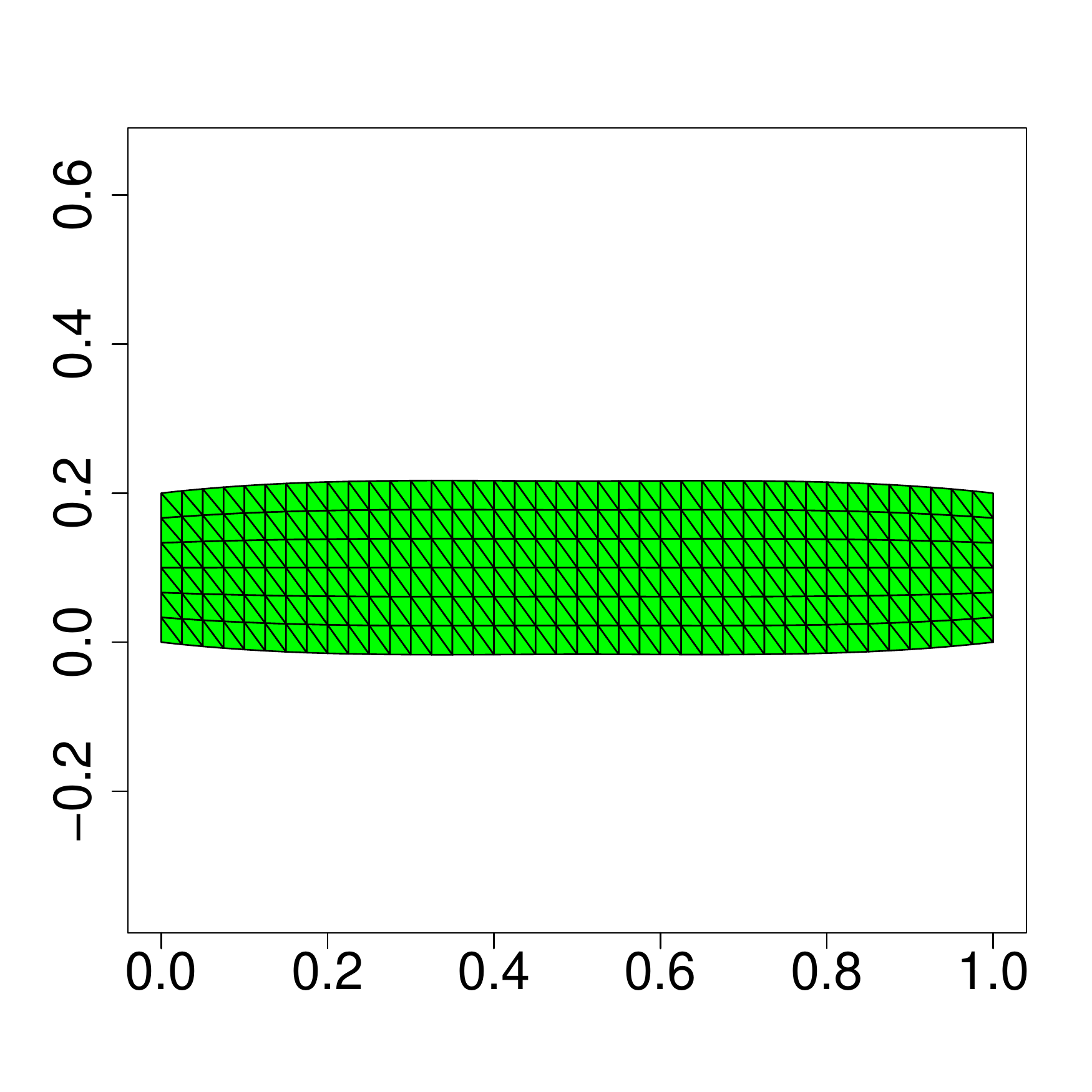}}

 	\end{center}		
	\caption{Exemplary solutions of the weighted sum method of \cite{Doganay2019} and the initial shape $x_{0}$. \label{fig:TC1_shapes}}
\end{figure}

This is the motivation for using a discretized straight rod with constant thickness of $0.2\,\text{m}$ as the initial shape $x_0$ for Pareto front tracing, see Figure~\ref{fig:TC1_ODE_straight}, even though this particular shape was not the outcome of any weighted sum scalarization considered in \cite{Doganay2019}. To determine a corresponding weight $\lambda_0$ such  that $x_0$ is $J_{\lambda_0}$-critical the equation $\| \nabla J_{\lambda}(x_0) \|=0$ is solved for $\lambda_0\in(0,1)$. The resulting weight has the value $\lambda_0\approx 0.813$ and we therefore have $x_0\approx x(0.813)$. 
Numerical integration in $[\lambda_l,\lambda_u]=[\lambda_0-0.66, \lambda_0+0.1]$ with a step size of $h=0.01$ resulted in solutions of varying thickness that are also straight rods and hence coincide with the results of \cite{Doganay2019}, see Figure~\ref{fig:TC1_ODE_shapes} for some exemplary shapes corresponding to those from Figure~\ref{fig:TC1_shapes}.

\begin{figure}[ht]

    \centering
	\begin{tikzpicture}[x=\textwidth]
		\node [label=below:{$x(0.203)$}] (1) at (0,0) {\includegraphics[width=3cm,keepaspectratio]{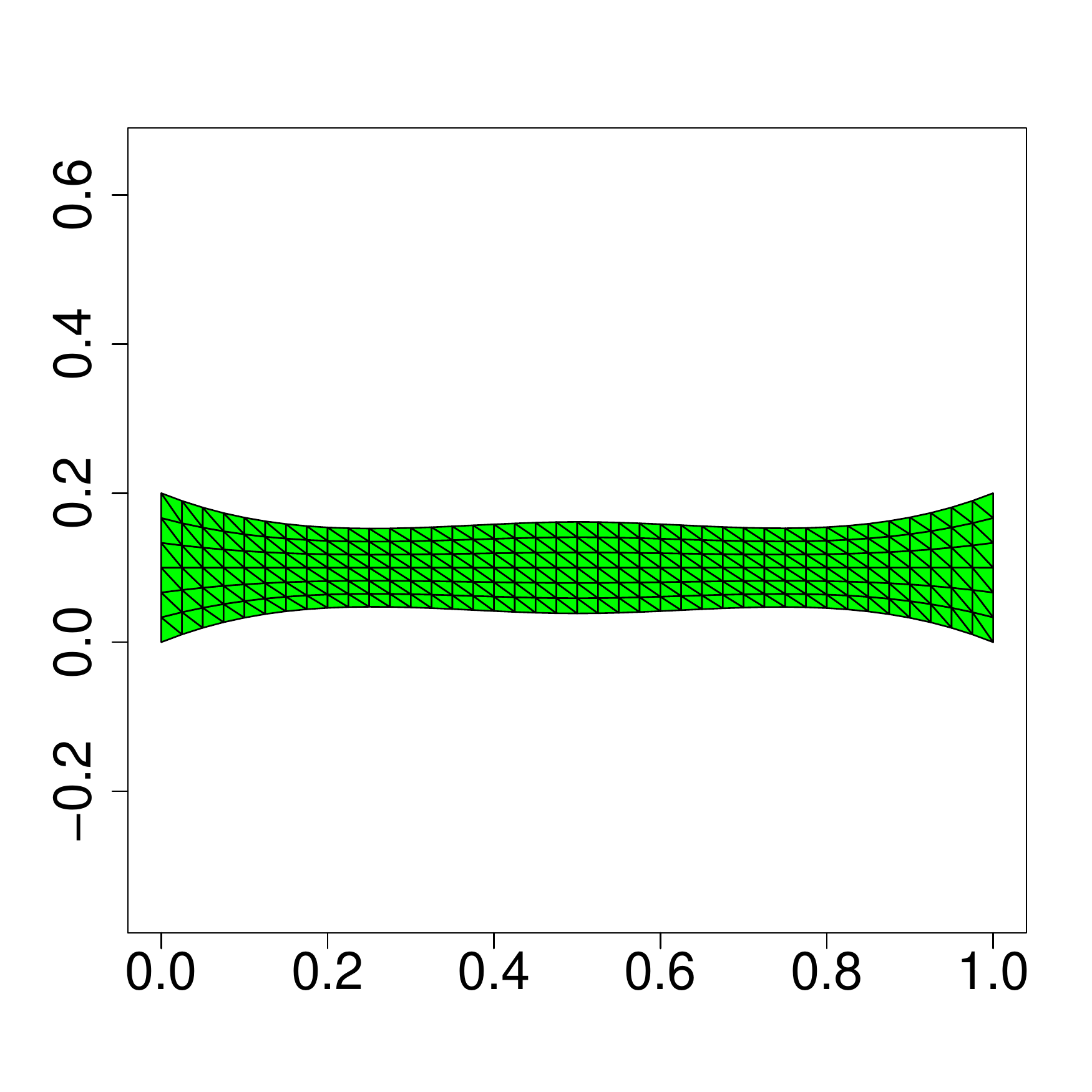}};
		\node [label=below:{$x(0.503)$}] (2) at (0.25,0) {\includegraphics[width=3cm,keepaspectratio]{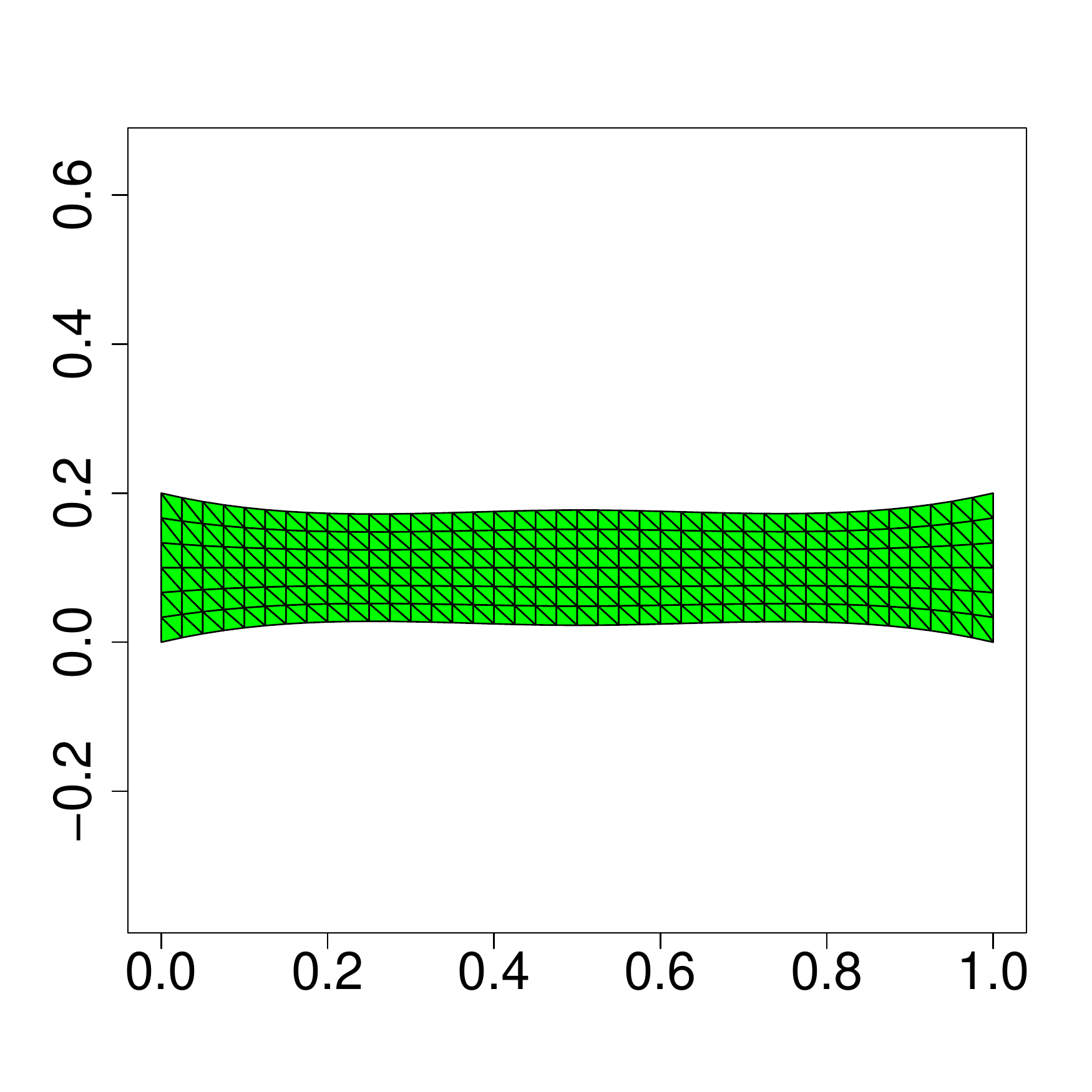}};
		\node [label=below:{$x_{0}$}] (3) at (0.5,0) {\includegraphics[width=3cm,keepaspectratio]{Figures/ShapeOpt/RM01/StraightRod.pdf}};
		\node [label=below:{$x(0.903)$}] (4) at (0.75,0) {\includegraphics[width=3cm,keepaspectratio]{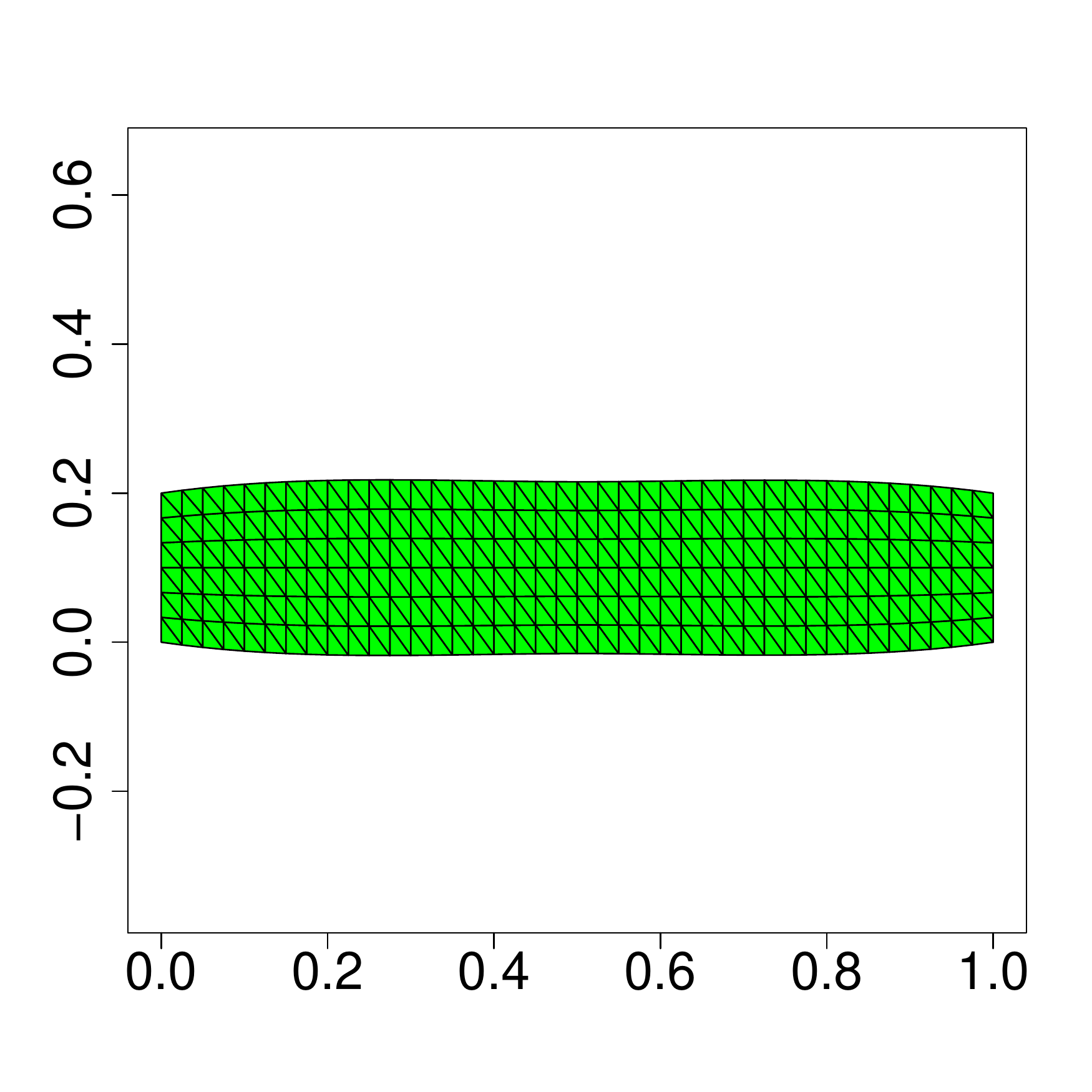}};
		
		\draw[->] (2) -- (1) node[midway, above = 0.1cm, sloped] {\tiny{Int. in }} node[midway, below = 0.1cm, sloped] {\tiny{neg. dir.}};
		\draw[->] (3) -- (2) node[midway, above = 0.1cm, sloped] {\tiny{Int. in}} node[midway, below = 0.1cm, sloped] {\tiny{neg. dir.}};
		\draw[->] (3) -- (4) node[midway, above = 0.1cm, sloped] {\tiny{Int. in}} node[midway, below = 0.1cm, sloped] {\tiny{pos. dir.}};
	\end{tikzpicture}	
	
    \caption{\label{fig:TC1_ODE_shapes} Exemplary results of the numerical integration of the ODE \eqref{eqn:bicriteria_ode} in positive and negative direction, starting from $x_{0}$.}

\end{figure}

In Figure~\ref{fig:TC1_CompareFronts} the outcome vectors obtained from numerical   integration are compared in the objective space with the outcome vectors obtained in \cite{Doganay2019} from the repeated solution of  weighted sum scalarizations using a  gradient descent algorithm. The results nicely document that the Pareto tracing approach not only covers the weighted sum solutions, but also  approximates a larger part of the (local) Pareto front. 

\begin{figure}[ht]
    \centering
    \includegraphics[scale=.6]{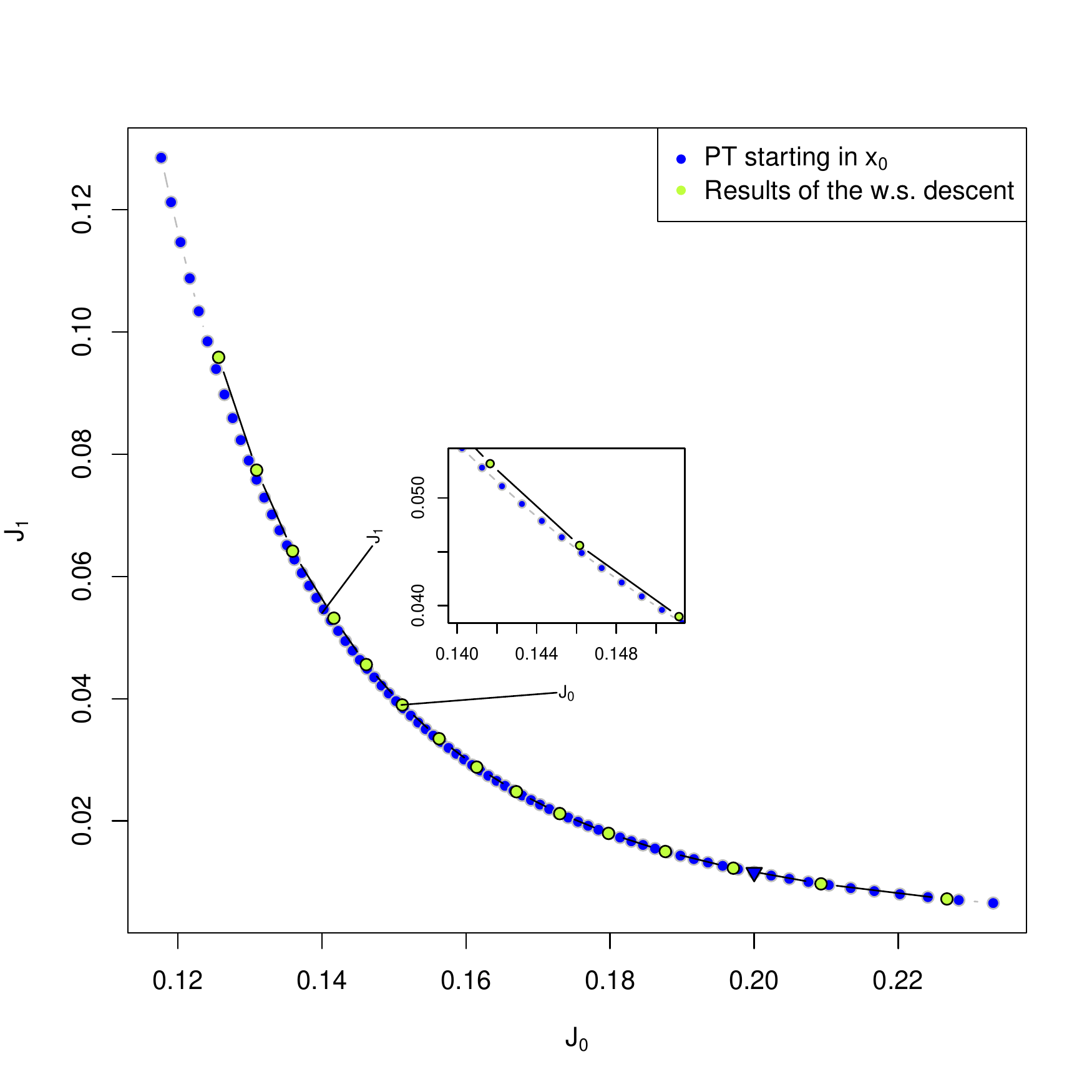}
    \caption{Comparison of the outcome vectors of Pareto tracing (blue) starting in $x_{0}$ and the outcome vectors obtained from the repeated application of gradient descent in \cite{Doganay2019} (green).}
    \label{fig:TC1_CompareFronts}
\end{figure}{}

Figure~\ref{fig:TC1_Optimality} shows the results from evaluating  
the first and the second order optimality conditions during the course of Pareto tracing, validating the statement of Proposition~\ref{prop:eps-crit_front}(ii). Indeed, the results nicely show that the computed shapes consistently achieve good  w.r.t.\ first and second order optimality tests.

\begin{figure}[ht]
	\begin{center}
		\subfloat[First order optimality: $\|\nabla_xJ_{\lambda}(x)\|$ ]{\includegraphics[width=0.5\textwidth]{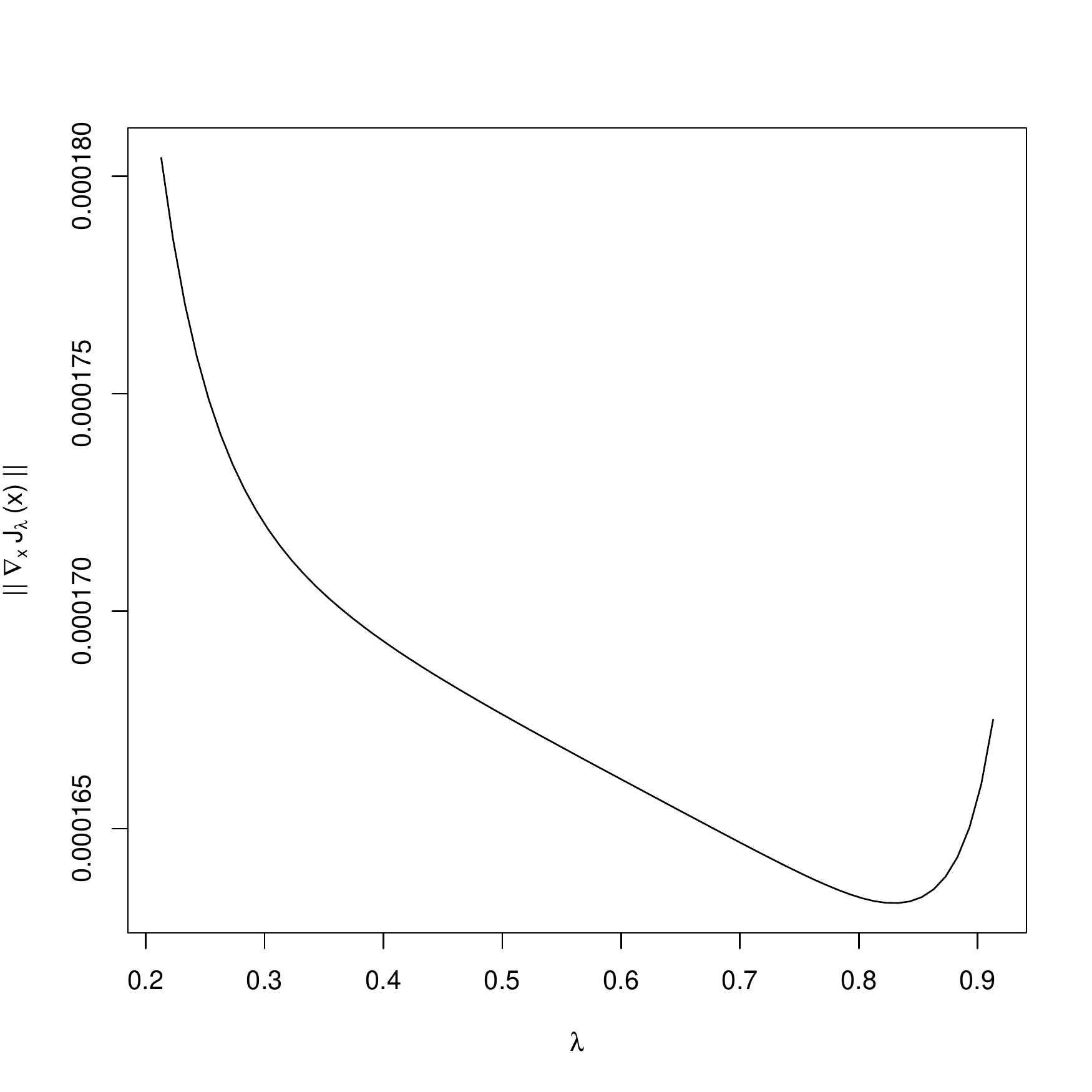}}
		%\hspace{\fill}
		\subfloat[Second order optimality: smallest eigenvalue  $\Lambda(\lambda,x(\lambda))$ of $\nabla_x^2J_{\lambda}(x)$
		]{\includegraphics[width=0.5\textwidth]{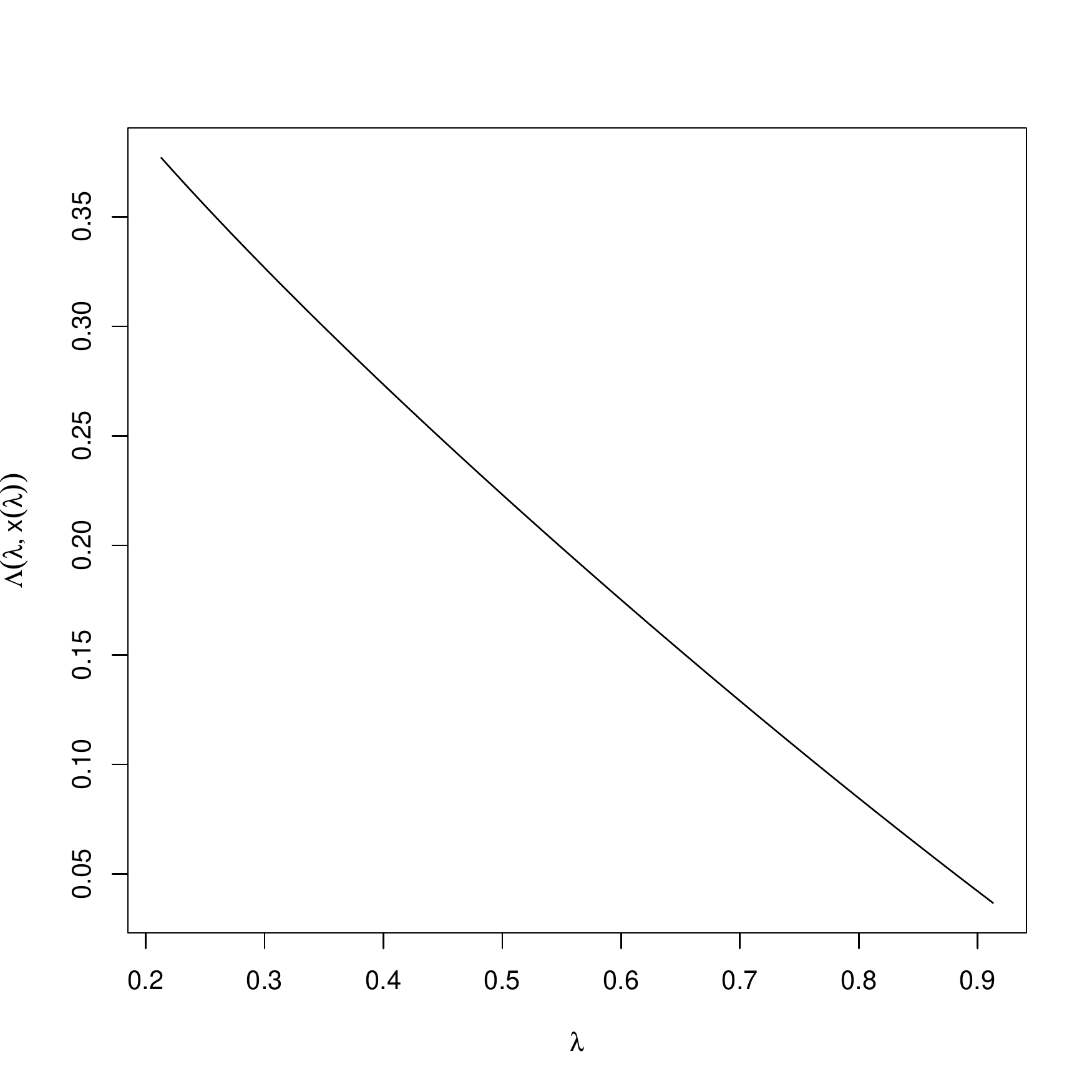}}
 	\end{center}	
	\caption{Straight Joint: Evaluating first and second order optimality during Pareto tracing \label{fig:TC1_Optimality}	}
\end{figure}

\subsubsection*{Test Case 2: An S-Shaped Joint} 
\label{subsubsec:S-Joint}
In the second test case the right boundary is placed about $0.27\,\text{m}$ lower than the left boundary, and hence an S-shaped joint is sought rather than a straight joint. In this case, the optimal shapes are not obvious. The numerical studies of \cite{Doganay2019} suggest that the (locally) Pareto optimal shapes resemble the profiles of whales with varying volume.  
Figure~\ref{fig:TC2_WS} shows exemplary solutions from \cite{Doganay2019} obtained from solving weighted sum scalarizations with weights $\lambda=0.25,0.4,0.6,0.8$. For $\lambda<0.25$ and $\lambda > 0.8$ the gradient descent method did not converge and hence we omit these solutions for the comparison.

\begin{figure}[ht]
	\begin{center}
		\subfloat[$\lambda=0.25;\ x_{0,k',0.25}$\label{fig:TC2_WS_w025} ]{\includegraphics[width=0.25\textwidth]{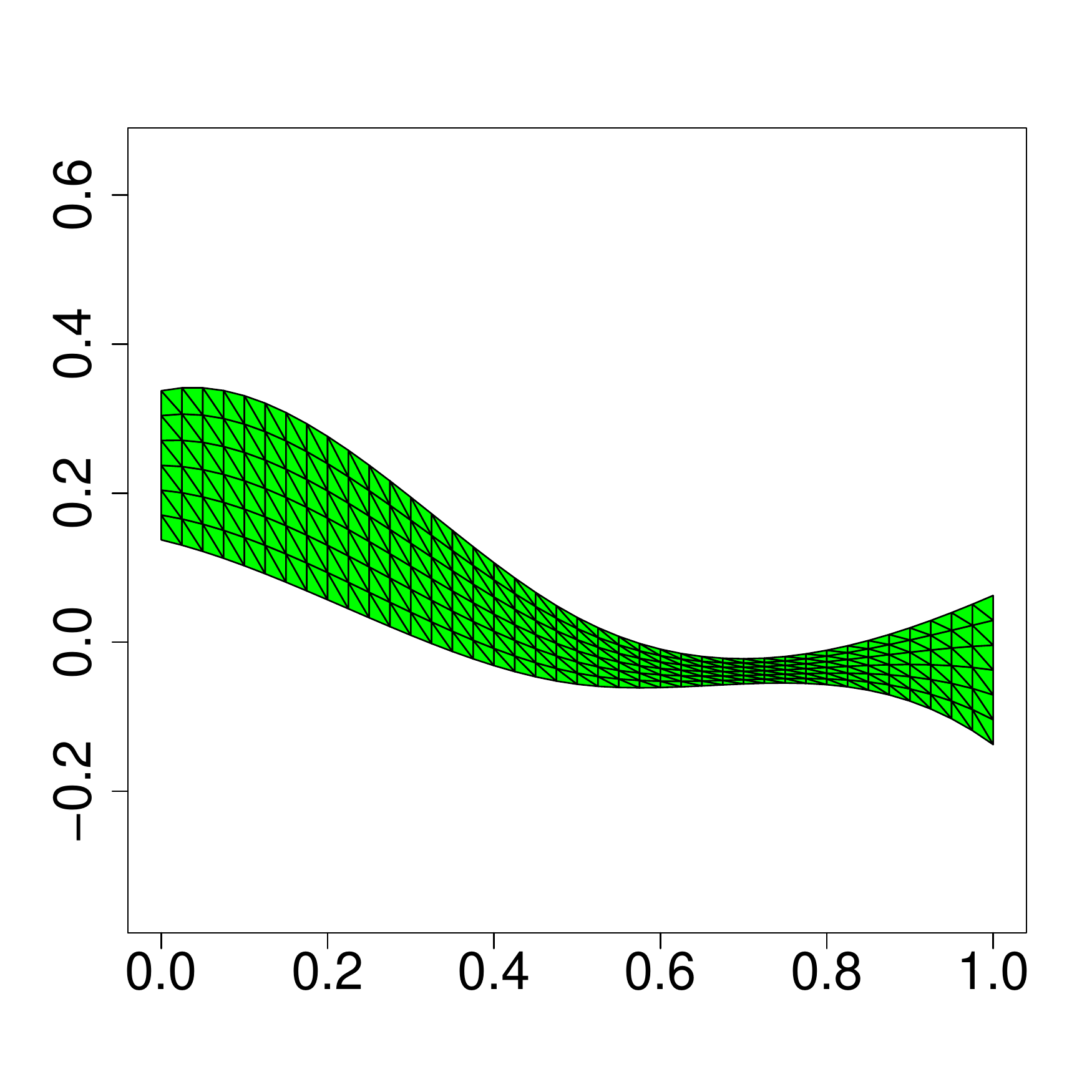}}
		%\hspace{\fill}
		\subfloat[$\lambda=0.4$ ]{\includegraphics[width=0.25\textwidth]{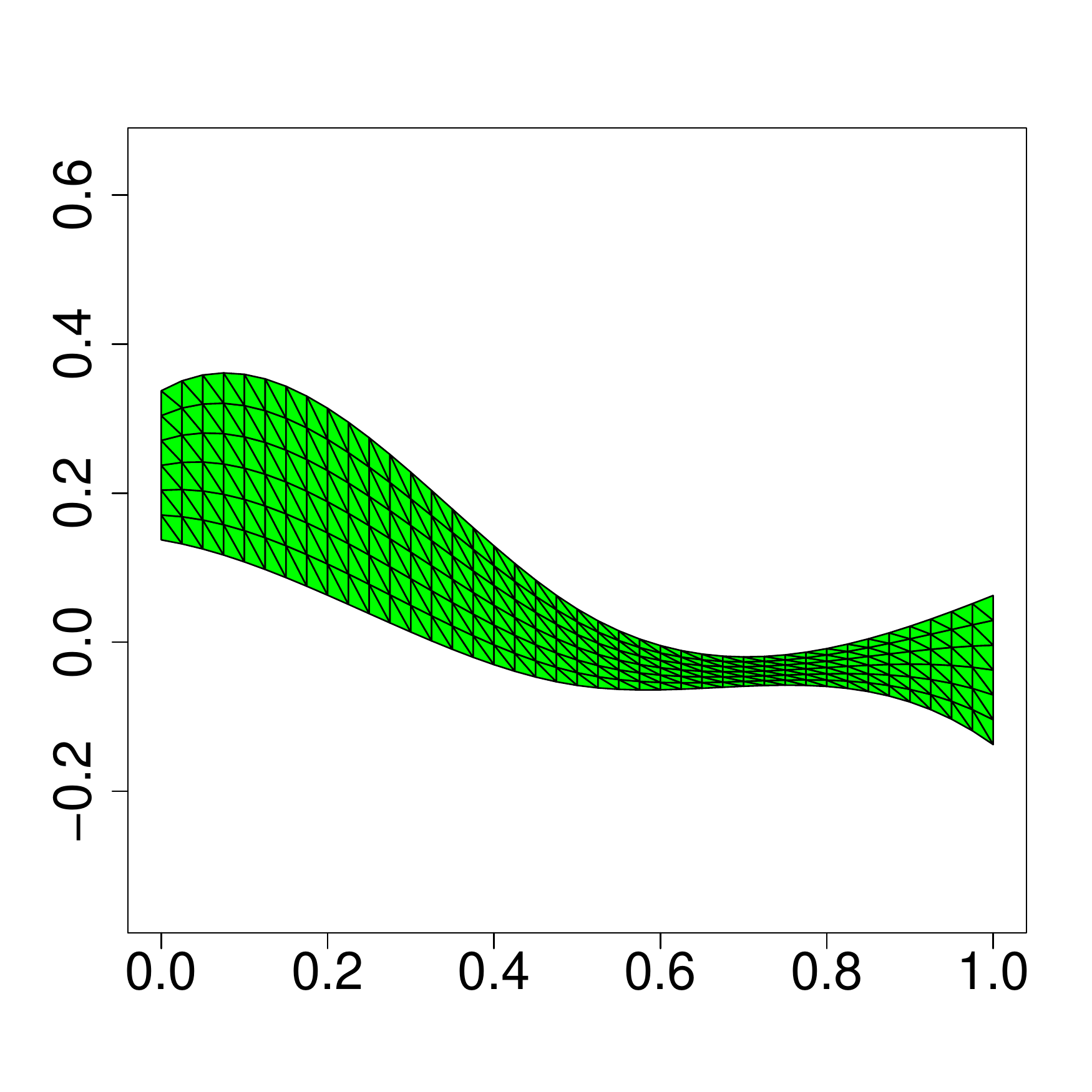}}
	    %\hspace{\fill}
		\subfloat[$\lambda=0.6$  ]{\includegraphics[width=0.25\textwidth]{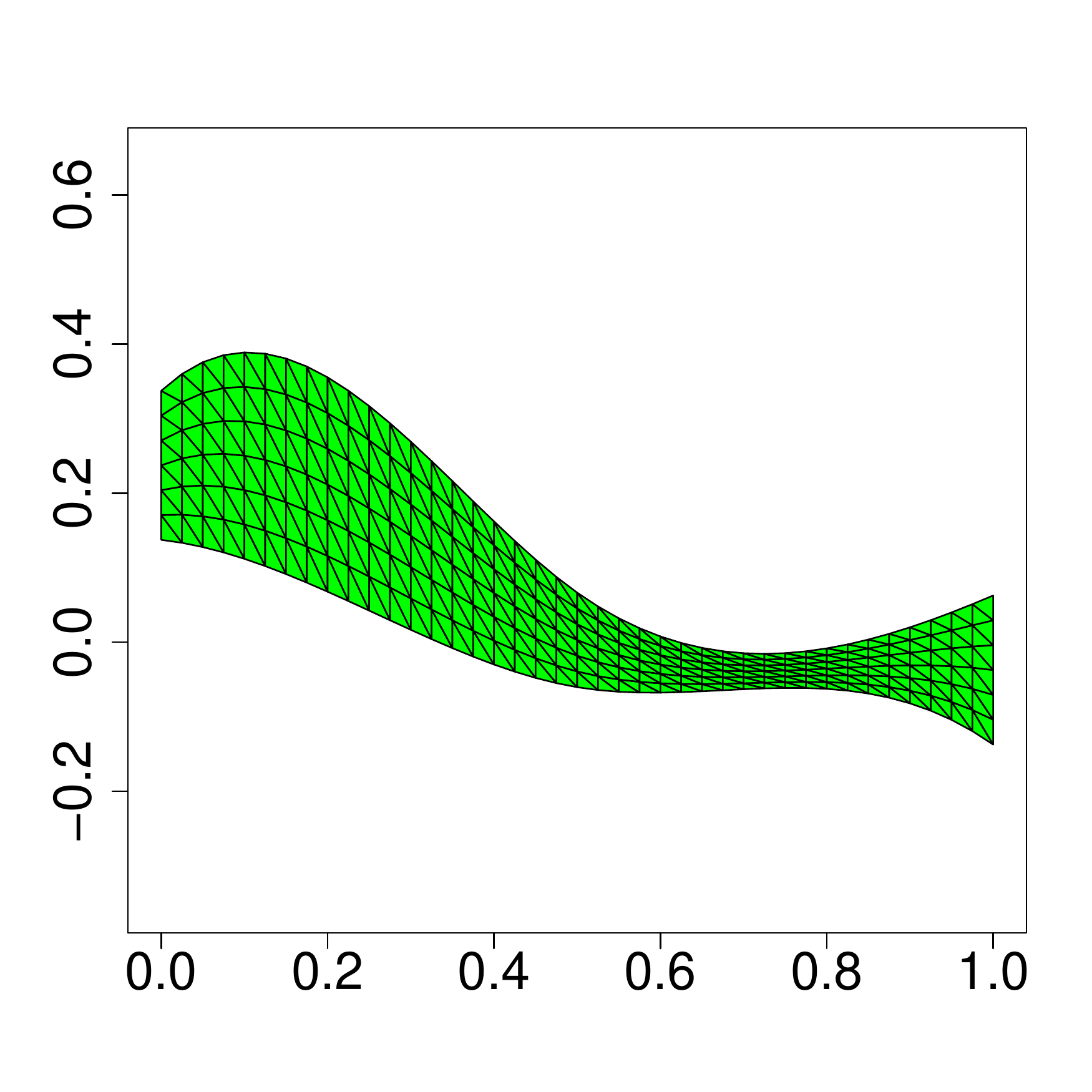}}
		%\hspace{\fill}
		\subfloat[$\lambda=0.8;\ x_{0,k'',0.8}$\label{fig:TC2_WS_w080}
		]{\includegraphics[width=0.25\textwidth]{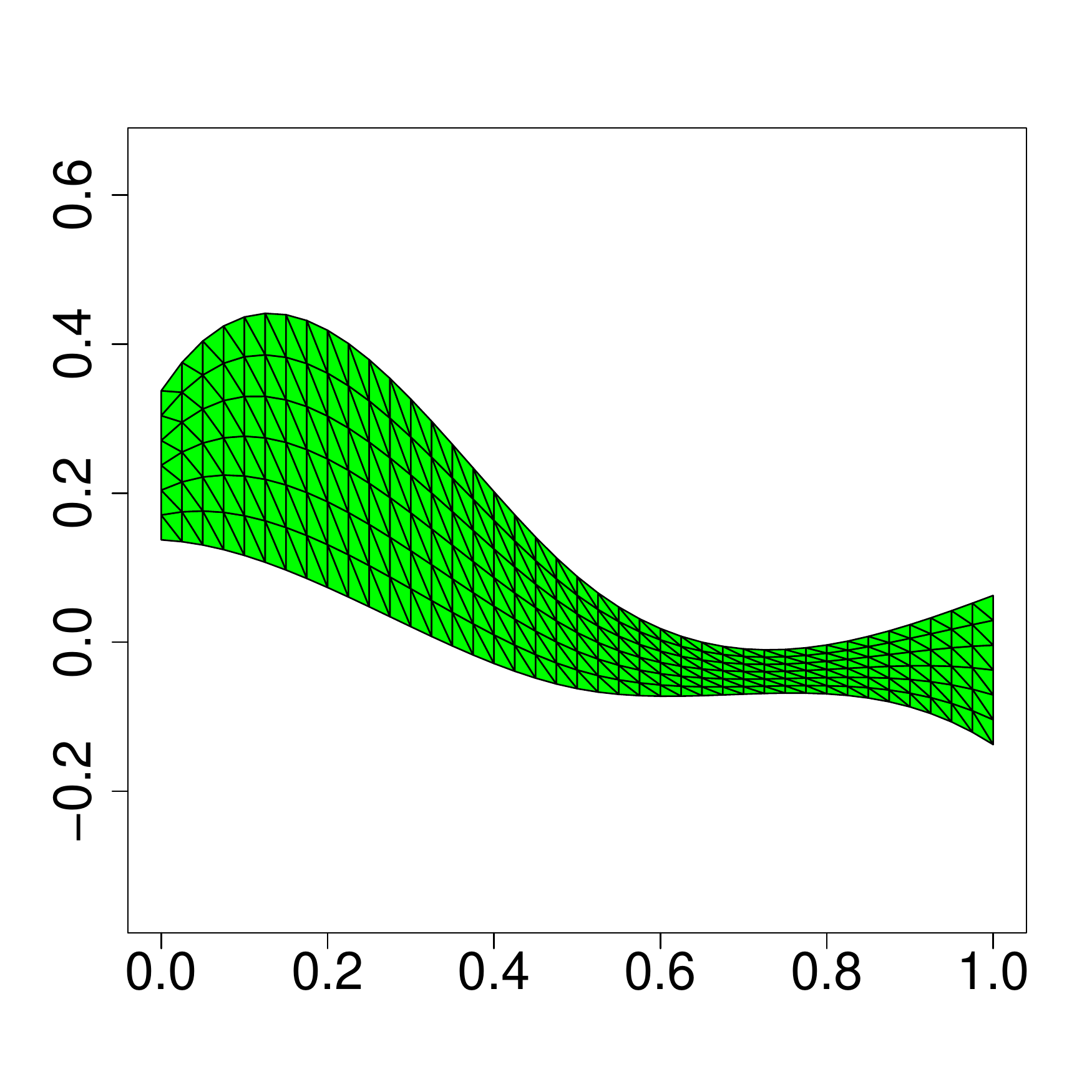}}

 	\end{center}		
	\caption{Exemplary solutions of the weighted sum method of \cite{Doganay2019}, including $x_{0,k',0.25}$ and $x_{0,k'',0.8}$. \label{fig:TC2_WS}}
\end{figure}

 Since this test case is more complex than  Test Case~1 above, 
 we choose two initial values and compare the respective solutions obtained with Pareto tracing. Towards this end, we consider the weighted sum solutions $x_{0,k',0.25}:=x_{0,k'}=x_{k'}(0.25)$ and $x_{0,k'',0.8}:=x_{0,k''}=x_{k''}(0.8)$ as initial values, i.e., the two solutions with the smallest and largest weight for which the gradient descent method from \cite{Doganay2019} converged. Numerical integration is applied on  $[\lambda_l,\lambda_u]=[0.25,0.8]$, moving in positive (forward) direction when starting from $x_{0,k',0.25}$, and moving in negative (backward) direction when starting from $x_{0,k'',0.8}$. In both cases, we use a step length of $h=0.01$. 

\begin{figure}[ht]
	\begin{center}
		\subfloat[Pareto tracing started in $x_{0,k',0.25}$ (purple) and $x_{0,k'',0.8}$ (blue), compared to the weighted sum results from \cite{Doganay2019} (green)\label{fig:TC2_CompareFronts} ]{\includegraphics[width=0.48\textwidth]{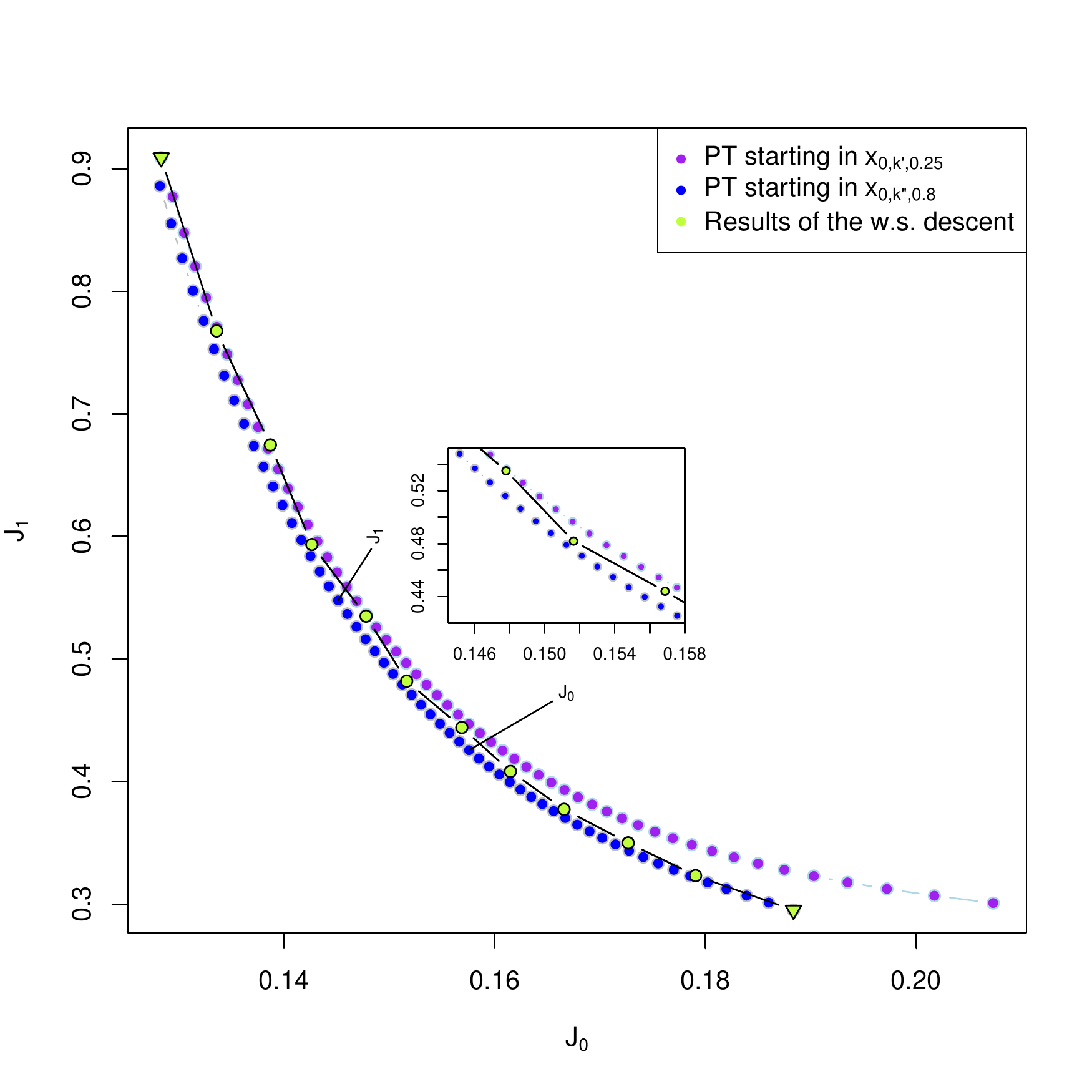}}
		\hspace{0.5cm}
		\subfloat[Pareto tracing  started in premature solutions $x_{0,k_1,0.8}$ (brown), $x_{0,k_2,0.8}$ (green) and $x_{0,k_3,0.8}$ (red), compared to the results for $x_{0,k'',0.8}$ (blue; c.f.\ left figure)  \label{fig:TC2_NOS_PF}
		]{\includegraphics[width=0.48\textwidth]{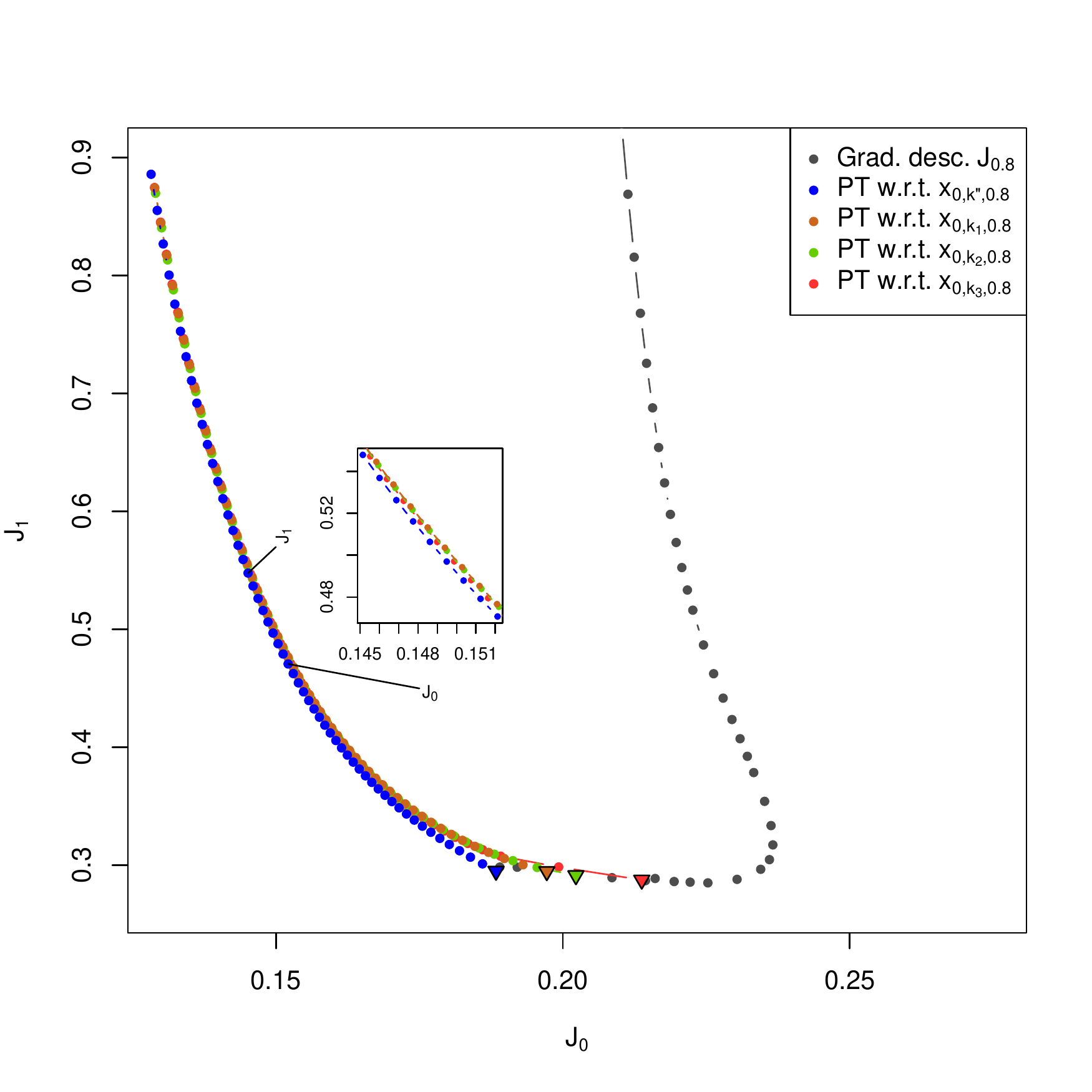}}
 	\end{center}	
	\caption{Comparison of the outcome vectors obtained with Pareto tracing using forward and backward integration (left) and starting from sub-optimal initial solutions (right)}
\end{figure}

A comparison of the outcome vectors obtained from forward and backward Pareto tracing and the results from \cite{Doganay2019} are illustrated in the outcome space in Figure~\ref{fig:TC2_CompareFronts}. The green points correspond to the outcome vectors obtained from the repeated solution of weighted sum scalarizations using gradient descent, where the left most point on the curve corresponds to $x_{0,k',0.25}$ and the right most point corresponds to $x_{0,k'',0.8}$, respectively. Nearly all results for $x_{k'}(\lambda)$ with initial value $\lambda_0=0.25$ (purple trajectory) are dominated by weighted sum solutions, while all of the weighted sum solutions (with obviously the exception of $x_{0,k'',0.8}$) are dominated by the results for $x_{k''}(\lambda)$ with initial value $\lambda_0=0.8$ (blue trajectory).
The shapes obtained for  $x_{k''}(\lambda)$ also resemble the profiles of whales, see Figure~\ref{fig:TC2_ODE_w080}, and are therefore coherent with the weighted sum solutions of \cite{Doganay2019}.

\begin{figure}[ht]

    \centering
	\begin{tikzpicture}[x=\textwidth]
		\node [label=below:{$x_{k''}(0.250)$}] (1) at (0,0) {\includegraphics[width=3cm,keepaspectratio]{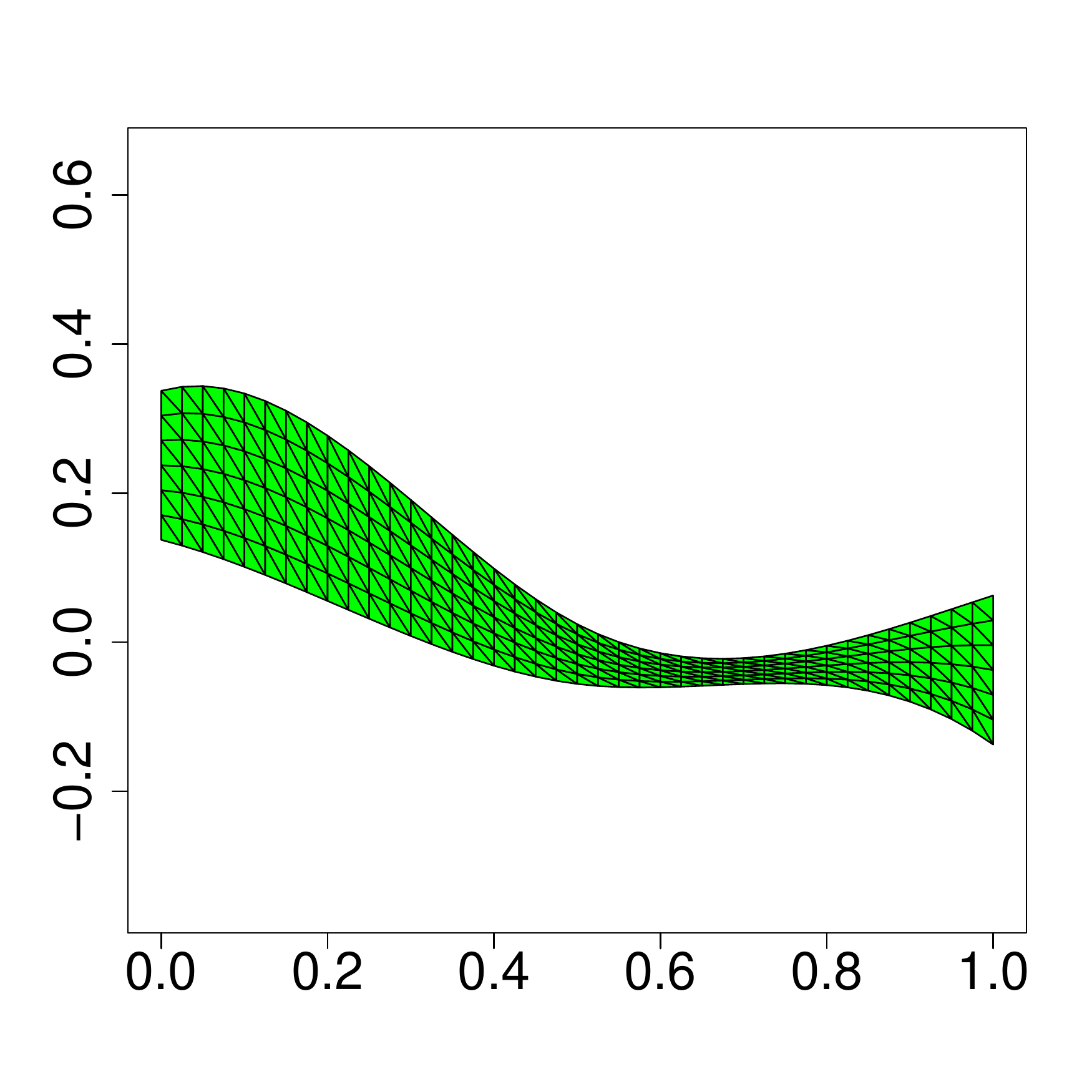}};
		\node [label=below:{$x_{k''}(0.400)$}] (2) at (0.25,0) {\includegraphics[width=3cm,keepaspectratio]{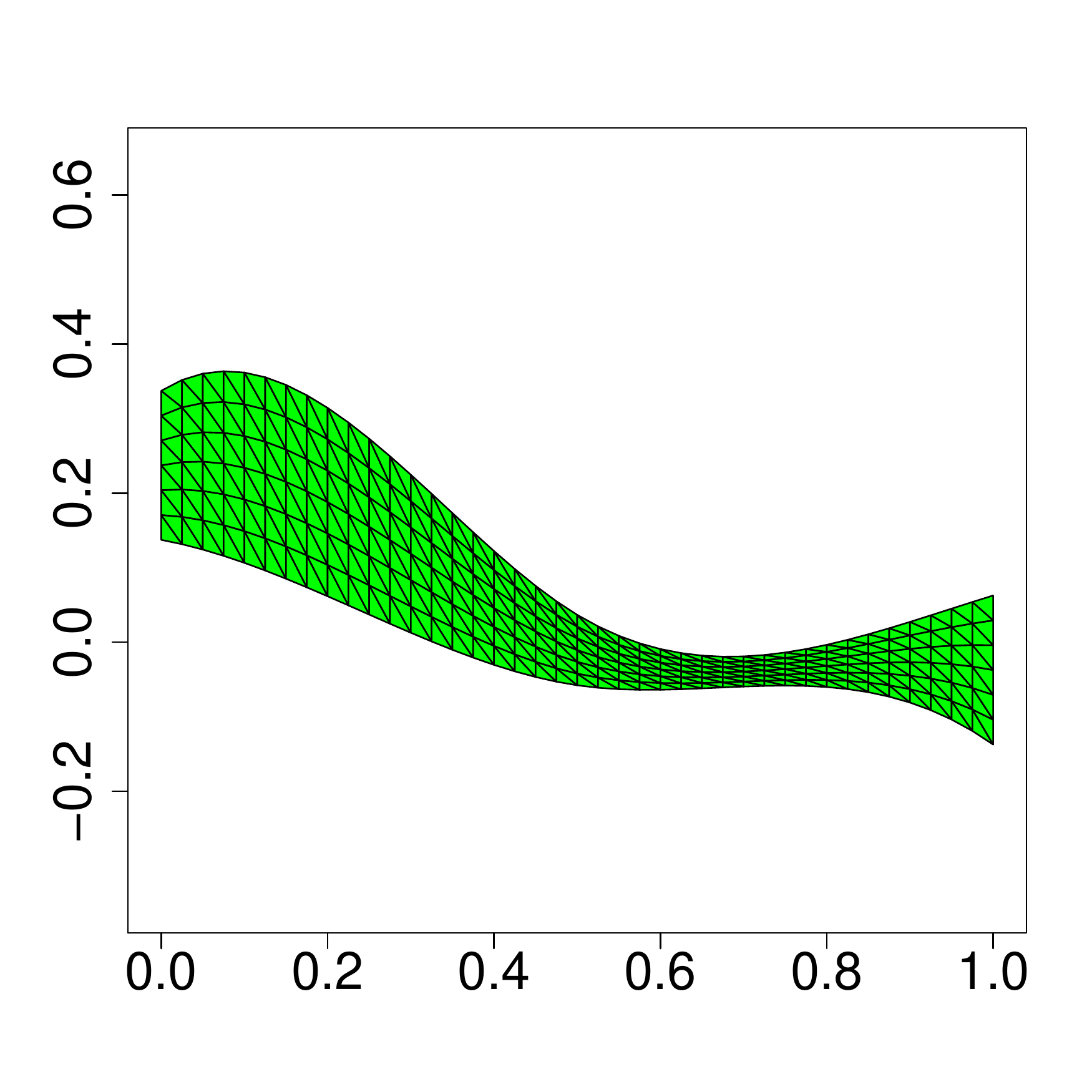}};
		\node [label=below:{$x_{k''}(0.600)$}] (3) at (0.5,0) {\includegraphics[width=3cm,keepaspectratio]{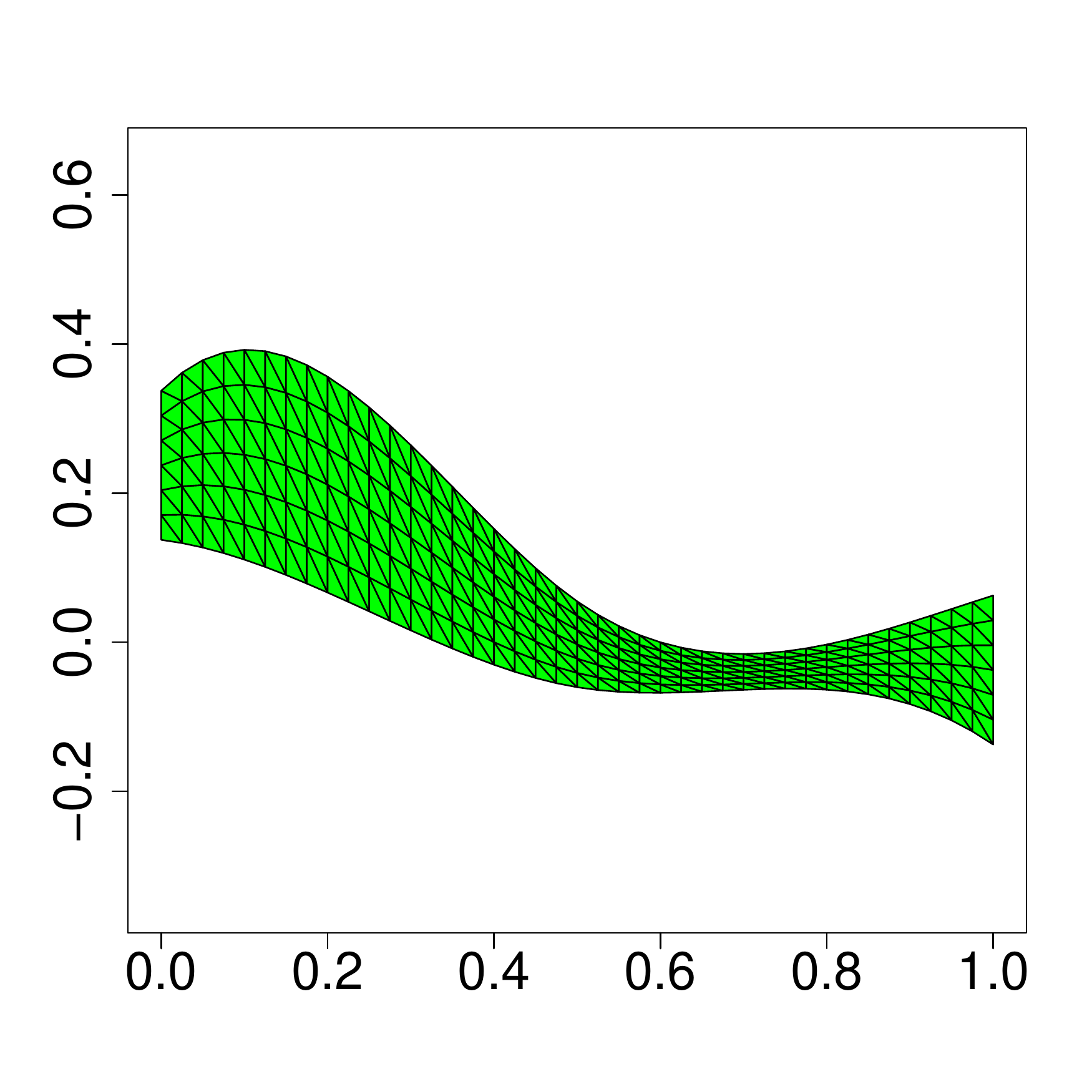}};
		\node [label=below:{$x_{0,k'',0.8}$}] (4) at (0.75,0) {\includegraphics[width=3cm,keepaspectratio]{Figures/ShapeOpt/RM02/RM02_WS_w080.pdf}};

		\draw[->] (2) -- (1) node[midway, above = 0.1cm, sloped] {\tiny{Int. in }} node[midway, below = 0.1cm, sloped] {\tiny{neg. dir.}};
		\draw[->] (3) -- (2) node[midway, above = 0.1cm, sloped] {\tiny{Int. in}} node[midway, below = 0.1cm, sloped] {\tiny{neg. dir.}};
		\draw[->] (4) -- (3) node[midway, above = 0.1cm, sloped] {\tiny{Int. in}} node[midway, below = 0.1cm, sloped] {\tiny{neg. dir.}};
	\end{tikzpicture}	

	\caption{Exemplary shapes obtained with backward Pareto tracing  in $x_{0,k'',0.8}$. \label{fig:TC2_ODE_w080}}
\end{figure}

We further investigated how the solutions differ when the Pareto tracing method is applied starting from a sub-optimal initial value that is obtained if the gradient descent algorithm from \cite{Doganay2019} is stopped prematurely. 
In Figure~\ref{fig:TC2_NOS_PF} the trajectories of three further ODE solves starting in suboptimal initial solutions $x_{0,k_1,0.8}, x_{0,k_2,0.8}$ and $x_{0,k_3,0.8}$, with corresponding initial values $\lambda_{0,k_1}\approx0.808,\lambda_{0,k_2}\approx0.810$ and $\lambda_{0,k_3}\approx0.814$, respectively, are shown. Backward numerical integration with a step length of $h=0.01$ is applied on  $[\lambda_{l,k_i},\lambda_{u,k_i}]=[\lambda_{0,k_i}-0.55,\lambda_{0, k_i}],\ i=1,2,3$, respectively.  
Here, the grey dots show iterates of the gradient descent method applied to the weighted sum objective $J_{0.8}$. 
Despite the relatively bad choices of the initial values, we observe that the solutions w.r.t.\ $k_1,k_2$ and $k_3$ still yield good approximations of the (local) Pareto front, see Figures~\ref{fig:TC2_NOS_PF}  and~\ref{fig:TC2_Optimality}. This can be partially explained by the fact that the gradient descent algorithm applied to the weighted sum objective $J_{0.8}$ first  approaches (an extension of) the Pareto front by making large steps w.r.t.\ $J_1$ (apparently this leads to larger improvements of $J_{0.8}$  in early stages of the optimization process), and moves along the Pareto front during later stages of the optimization when the relation between the potential improvements w.r.t.\ $J_0$ and $J_1$ changes in favor of $J_0$. The sub-optimal initial solutions $x_{0,k_1,0.8}, x_{0,k_2,0.8}$ and $x_{0,k_3,0.8}$ approximate an extension of the Pareto front w.r.t.\ improved $J_1$-values and thus provide very good starting points for Pareto tracing. Note, however, that this is a problem specific observation that largely depends on the value of $\lambda_0$ and, even more so, on the relative variability (slopes) of the considered objective functions. A similar behavior can not be expected in general, as can be seen, for example, in the quadratic case illustrated in Figure~\ref{fig:paretoCQMP}.

\begin{figure}[ht]
	\begin{center}
		\subfloat[First order optimality: $\|\nabla_xJ_{\lambda}(x)\|$ ]{\includegraphics[width=0.5\textwidth]{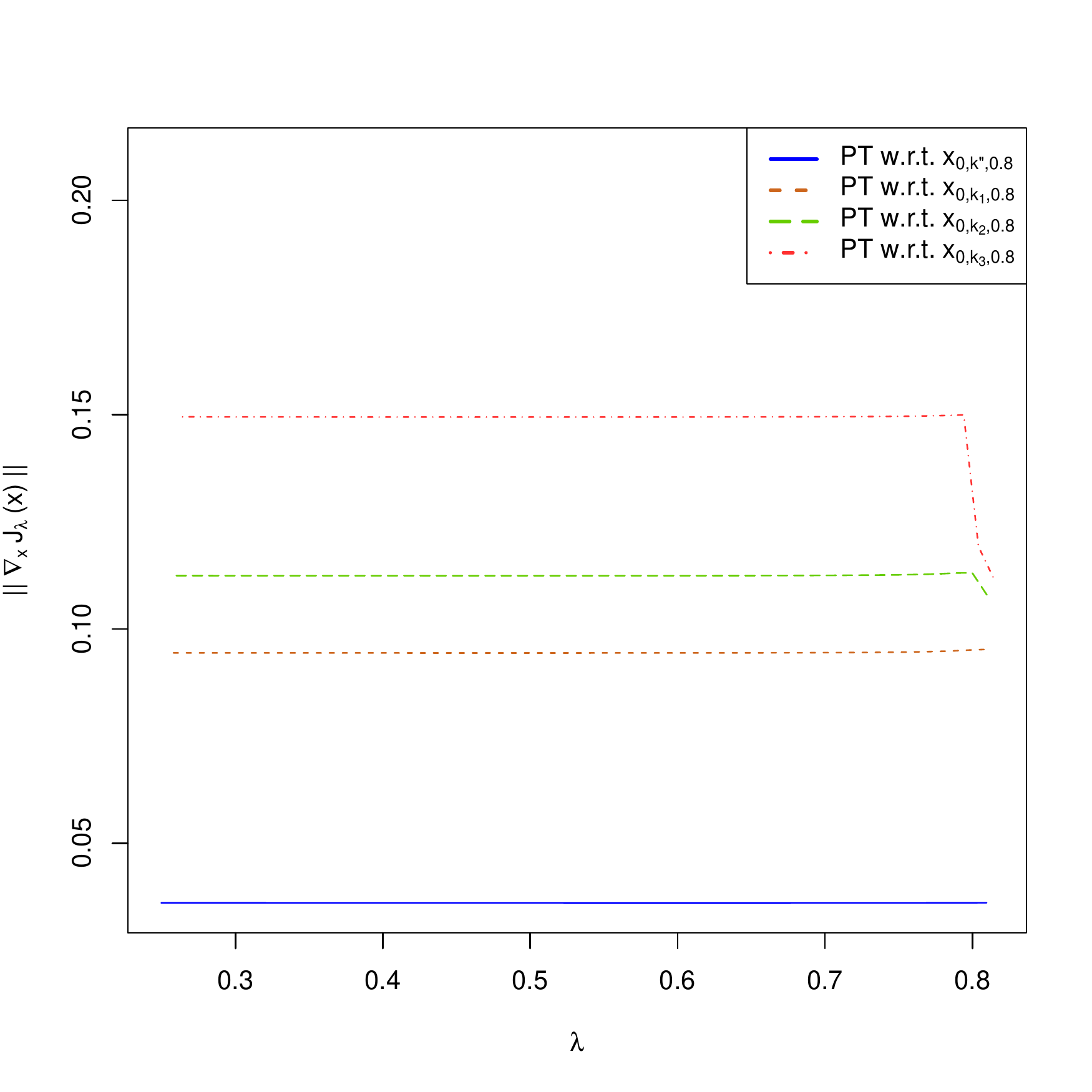}}
		%\hspace{\fill}
		\subfloat[Second order optimality: smallest eigenvalue  $\Lambda(\lambda,x(\lambda))$ of $\nabla_x^2J_{\lambda}(x)$
		]{\includegraphics[width=0.5\textwidth]{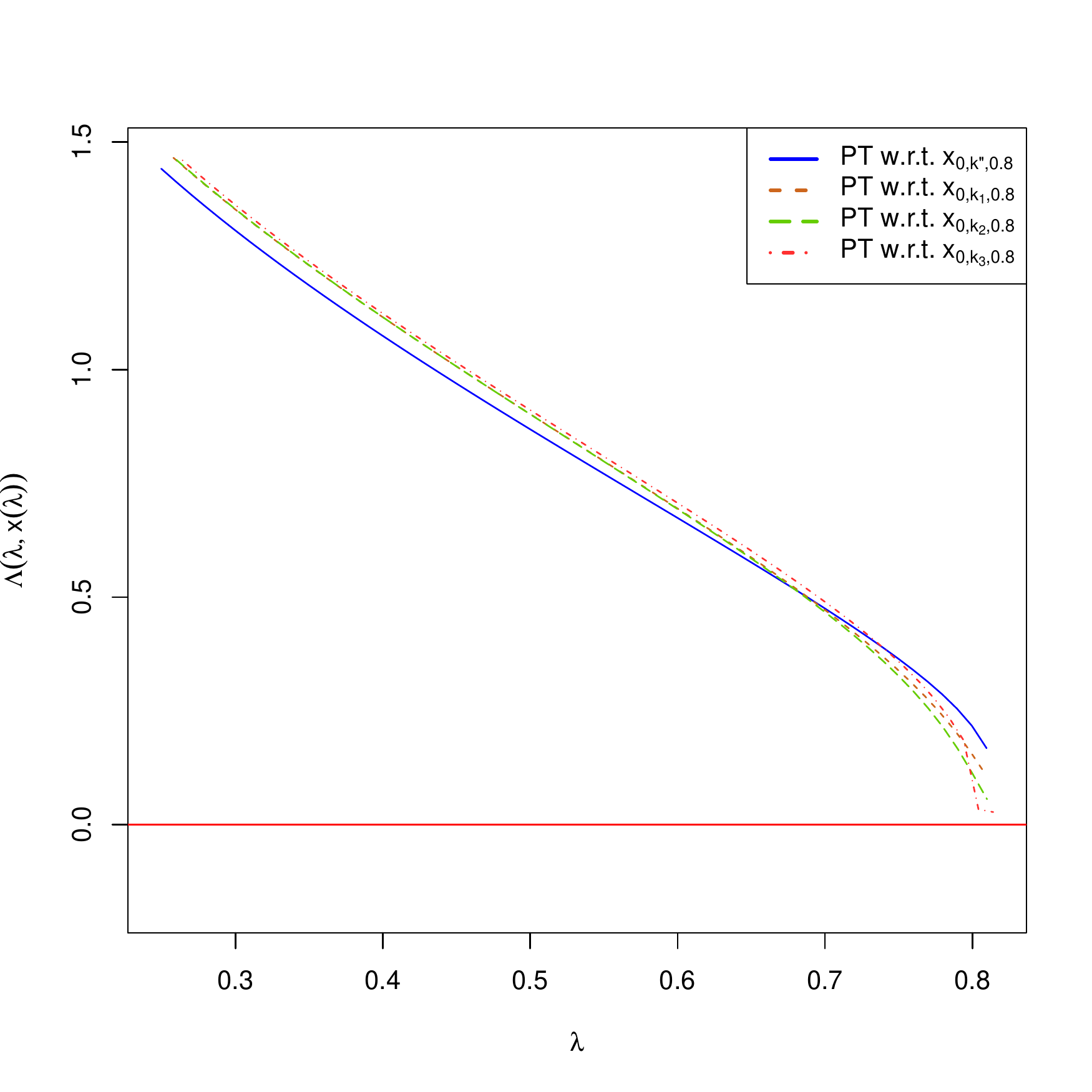}}
 	\end{center}	
	\caption{S-Shaped Joint: Evaluating first and second order optimality during Pareto tracing in dependence of the quality of the initial value\label{fig:TC2_Optimality}	}
\end{figure}

One can also observe that in the above examples Pareto tracing yields a more dense approximation of the (local) Pareto front than the iterative solution of weighted sum problems in \cite{Doganay2019}. Note that this density depends on the choice of the step length $h$ in the Pareto tracing method. Indeed, small step lengths induce dense approximations, however, at comparably high computational costs, while large step lengths may be used to quickly obtain a rough estimate of the Pareto front with rather few and distant solutions. So, the question arises how robust the Pareto tracing approach is w.r.t.\ the step length $h$, and in particular for larger values of $h$. In Figure~\ref{fig:TC2_StepLength} the results of some further ODE solves starting in $x_{0,k'',0.8}$ with different step lengths $h=0.001,0.04,0.08$ are compared. We observe that the results obtained for a larger step length are approximately equal to a subset of the outcome vectors obtained for smaller step lengths (assuming divisibility among the considered step lengths). 
Hence, in this case it is possible to obtain a relatively coarse representation of the (local) Pareto front by using a relatively large step length. This was also observed for the simpler Test Case~1. Note that while the step length $h$ remains constant during the course of the Pareto tracing method, the distance between two consecutive outcome vectors on the approximated (local) Pareto front may differ significantly. This is due to the fact that each iterate $x(\lambda)$ approximates the solution of a weighted sum scalarization $J_{\lambda}$. It is a well-known fact that equally spaced weights $\lambda\in[0,1]$ do in general not yield equally spaced outcome vectors on the Pareto front, see, e.g., \cite{das:aclo:1997} for a detailed analysis of this issue.

From a practical point of view, rough approximations of the Pareto front are of particular interest for computationally expensive problems like the bi-criteria shape optimization problem considered here. Indeed, computing one weighted sum solution with the method suggested in \cite{Doganay2019} came with the cost of $k_W+1$ gradient computations and $k_W\cdot k_A + 1$ objective function evaluations, where $k_W$ denotes the number of iterations of the gradient descent algorithm and $k_A$ denotes the number of Armijo iterations. For Test Case~2 the gradient descent algorithm needed on average $106.7$ iterations, and per iteration on average $5.3$ Armijo iterations to compute a solution for a given weight, i.e., $107.7$ gradient computations and $566.5$ objective function evaluations in total. Given a sufficiently good initial solution, the Pareto tracing approach needs only $14$ gradient computations and one objective function evaluation to compute one further solution. This is a significant speed up that, in combination with the robustness w.r.t.\ the step length, allows for an approximation of a wide range of solutions at reasonable computational cost.
\begin{figure}[h]
    \centering
    \includegraphics[scale=.6]{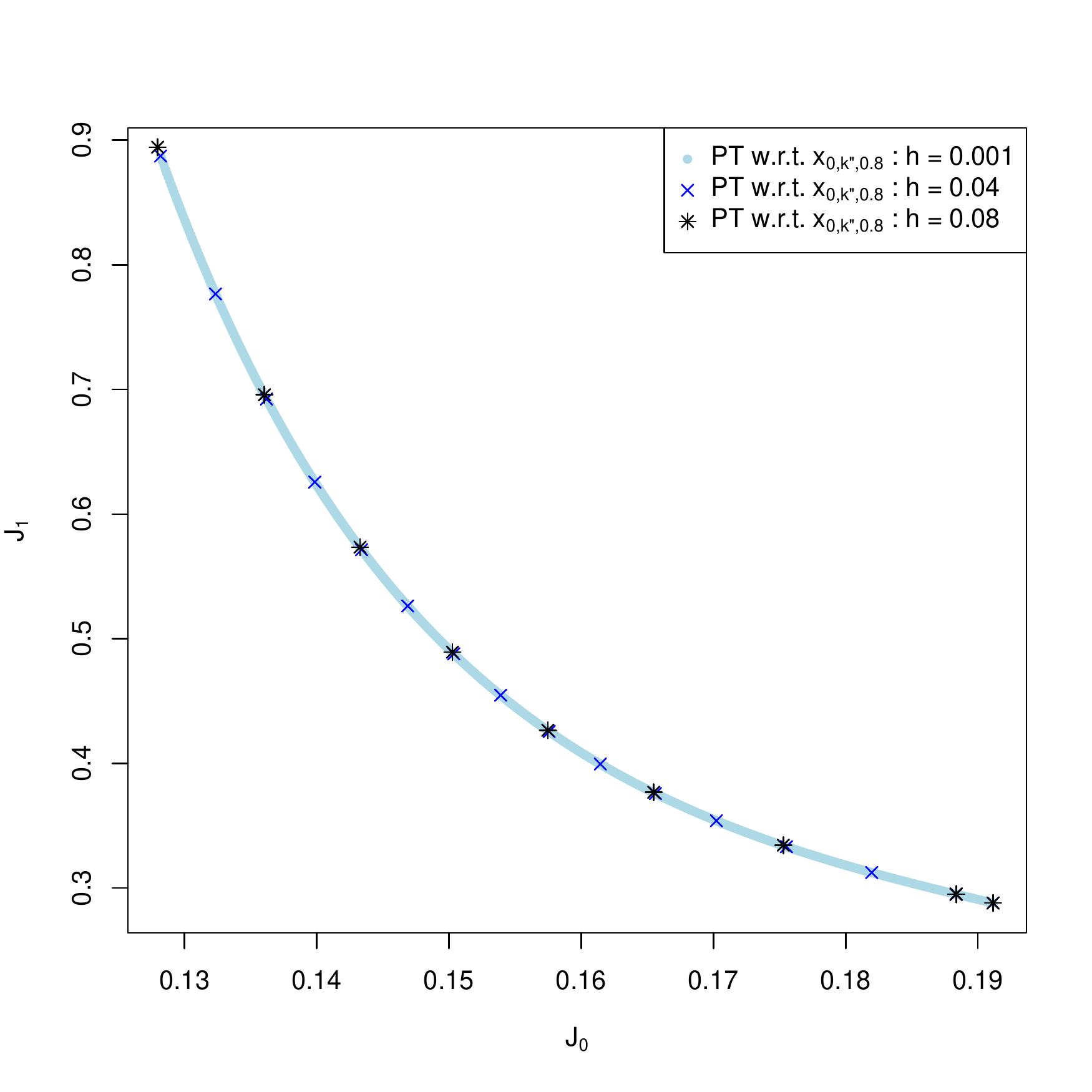}
    \caption{Comparison of the outcome vectors obtained with Pareto tracing started in $x_{0,k'',0.8}$ with different step lengths $h=0.001,0.04,0.08$.}
    \label{fig:TC2_StepLength}
\end{figure}{}

\section{Conclusion and Outlook}

We have presented a novel approach for approximating the Pareto front by tracing it using numerical time integration. The optimality conditions of a scalarization $J_\lambda$ were differentiated w.r.t.\ the scalarization parameter $\lambda$ to obtain an implicit ODE describing the front. If second order optimality conditions are fulfilled, a non-implicit ODE is obtained with a Lipschitz right hand side and the existence and uniqueness of the solution that is a representation of the Pareto front was shown. The smoothness of the Pareto front depends on the smoothness of the objective function. Further, we have shown how this extends to $\epsilon$-critical starting points. The use of standard explicit Runge-Kutta methods was established and the well-known convergence estimates can be applied. The technique was demonstrated for a simple bi-criteria convex quadratic optimization problem, as well as for problems originating from shape optimization.

We have not yet covered the effects of using adapted and/or adaptive step sizes in $\lambda$, e.g., in order to obtain equispaced points on the Pareto front. Different approaches are possible in this respect, see, for example, \cite{Eich09,schm:pare:2008}. Further, we will extend the approach to constrained problems via KKT conditions, and also consider other scalarizations.  While we have only considered the bi-criteria case here, the approach can also be used to handle more than two criteria. In the case of $d+1$ criteria, the front can be described by a $d$-dimensional functional (using again, e.g., weighted sum scalarizations with $d$ independent scalarization parameters) that can be obtained numerically using a $d$-dimensional mesh and numerical integration starting from some mesh point. This will also be considered in the future. 
\vspace{.2cm}

\noindent \textbf{Acknowledgements.}
We thank C. Hahn, M. Reese, J. Schultes, V. Schulz and M. Stiglmayr for interesting discussions and useful hints. M. Bolten, H. Gottschalk and K. Klamroth acknowledge financial support by the Federal Ministry of Education and Research - BMBF through the GIVEN project, grant no.~05M18PXA.

\bibliographystyle{plain}
\bibliography{MOShape}

\end{document}